\newtheorem{thm}{Theorem}
\newtheorem{lem}{Lemma}
\theoremstyle{definition}
\newtheorem{remark}{Remark}
\DeclareMathOperator{\rank}{rank}
\newcommand{\Z}{\mathbb{Z}}
\title[Complex Hadamard matrices]{Complex Hadamard matrices attached to a $3$-class 
nonsymmetric association scheme}
\author{Takuya Ikuta}
\address{Kobe Gakuin University, Kobe, 650-8586, Japan}
\email{ikuta@law.kobegakuin.ac.jp}
\author{Akihiro Munemasa}
\address{Tohoku University, Sendai, 980-8579, Japan}
\email{munemasa@math.is.tohoku.ac.jp}
\thanks{This work was supported by JSPS KAKENHI grant number 17K05155.}
\date{April 19, 2019}
\keywords{association scheme, complex Hadamard matrix}
\subjclass[2010]{05E30,05B30}
\begin{document}

\begin{abstract}
In this paper we classify complex Hadamard matrices contained in the Bose--Mesner algebra of
nonsymmetric $3$-class association schemes.
As a consequence of our classification, 
we have two infinite families and some small examples of complex Hadamard matrices
contained in the Bose--Mesner algebra of a self-dual fission of a complete multipartite graph.
\end{abstract}
\maketitle

\section{Introduction}
A complex Hadamard matrix is a square matrix 
$W$ of order $n$ which satisfies $W\overline{W}^\top= nI$ and
all of whose entries are complex numbers of absolute value $1$.
A complex Hadamard matrix is said to be Butson-type, 
if all of its entries are roots of unity.
Complex Hadamard matrices which are not of Butson-type but 
are constructed combinatorially, found applications outside
combinatorics \cite{FS}. In particular, a family of complex Hadamard
matrices has been found in the Bose--Mesner algebras of 
nonsymmetric $2$-class association schemes
\cite{MW}.
In our earlier work \cite{MI},
we considered nonsymmetric hermitian complex Hadamard matrices 
belonging to the Bose--Mesner algebra of a nonsymmetric $3$-class
association scheme 
$\mathfrak{X}=(X,\{R_i\}_{i=0}^3)$,
where $R_1^\top=R_2$, $R_3$ symmetric.
Its first eigenmatrix is given by
\begin{equation}\label{P}
P=\begin{pmatrix}
1&\frac{k_1}{2}&\frac{k_1}{2}&k_2\\
1&\frac12(r+bi)&\frac12(r-bi)&-(r+1)\\
1&\frac12(r-bi)&\frac12(r+bi)&-(r+1)\\
1&\frac{s}{2}&\frac{s}{2}&-(s+1)
\end{pmatrix},
\end{equation}
where $k_1$ is an even positive integer, $r,s$ are integers, 
$b$ is a positive real number, and $i^2=-1$
(see Lemma~\ref{lem:song} for more precise information.).
Let $\mathfrak{A}$ be the Bose--Mesner algebra of $\mathfrak{X}$ 
which is the linear span of the adjacency matrices
$A_0,A_1,A_2,A_3$ of $\mathfrak{X}$, where $A_1^\top=A_2$, $A_3$ symmetric.
Let $w_1,w_2,w_3$ be complex numbers of absolute value $1$.
We assume that $w_1\not=w_2$, and
set
\begin{equation} \label{eq:W}
W=A_0+w_1A_1+w_2A_2+w_3A_3\in\mathfrak{A}.
\end{equation}
In \cite[Theorem 1]{MI}, we have shown the following: 
We assume that the matrix \eqref{eq:W} is a 
hermitian complex Ha\-da\-mard matrix and not a real Ha\-da\-mard matrix.
Then $\mathfrak{X}$ is a nonsymmetric association scheme 
whose unique nontrivial symmetric relation 
consists of $2\alpha$ cliques of size $2\alpha$,
and the first eigenmatrix of $\mathfrak{X}$ is given by 
\[
\begin{pmatrix}
1&\alpha(2\alpha-1)&\alpha(2\alpha-1)&2\alpha-1\\
1&\alpha i&-\alpha i&-1\\
1&-\alpha i&\alpha i&-1\\
1&-\alpha&-\alpha&2\alpha-1
\end{pmatrix},
\]
where $\alpha$ is a positive integer.
Moreover, $w_1=\pm i$ and $w_3=1$.
Note that we have assumed $w_1\neq w_2$ since, if $w_1=w_2$, then
$W$ belongs to the Bose--Mesner algebra of a strongly regular graph.
Such complex Hadamard matrices, or more generally, type-II matrices,
have been classified by Chan and Godsil \cite{CD}. Therefore, we
will assume $w_1\neq w_2$ throughout this paper.

As a natural problem, omitting the hermitian condition, 
we are interested in whether other complex Hadamard matrices arise or not.
In this paper, 
we show that such complex Hadamard matrices
are contained in the Bose--Mesner algebra of a self-dual fission of a complete multipartite graph.
More precisely, we have two infinite families and some small examples,
as given in the following main theorem.

\begin{thm}\label{thm:main}
With the above assumptions,
the matrix {\rm(\ref{eq:W})} is a complex Hadamard matrix 
if and only if $(k_1,k_2,r,s,b)=(2a(2a-1)c,2a-1,0,-2a,2a\sqrt{c})$
for some positive integers $a,c$, and
one of the following holds.
\begin{enumerate}
\item[{\rm(i)}] $c=1$, and
\begin{enumerate}
\item[{\rm(a)}] $(w_1,w_2,w_3)=(w,-w,1)$ with $|w|=1$, 
\item[{\rm(b)}] $(w_1,w_2,w_3)=(w_{\pm},w_{\mp},w_{\pm}w_{\mp})$,
where
\[
w_{\pm}=\frac{-(a-1)\mp a\sqrt{2a(a-1)}+(-a\pm(a-1)\sqrt{2a(a-1)})i}{2a^2-2a+1},
\]
\item[{\rm(c)}] $a=2$, $(w_1,w_2,w_3)= (\frac{3\pm4i}{5},-1,\frac{-3\mp4i}{5}),
(-1,\frac{3\pm4i}{5},\frac{-3\mp4i}{5})$,
\end{enumerate}
\item[{\rm(ii)}] $a=1$, $c=3$, and
\begin{enumerate}
\item[{\rm(d)}] $(w_1,w_2,w_3)=(\frac{1\pm2\sqrt{2}i}{3},-1,1),(-1,\frac{1\pm2\sqrt{2}i}{3},1)$,
\item[{\rm(e)}] $(w_1,w_2,w_3)=(\pm i,-1,\mp i),(-1,\pm i,\mp i)$.
\end{enumerate}
\end{enumerate}
\end{thm}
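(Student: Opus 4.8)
The plan is to convert the Hadamard condition into a statement about the four eigenvalues of $W$ and then to solve the resulting system of modulus equations. Every entry of \eqref{eq:W} already has absolute value $1$, so $W$ is a complex Hadamard matrix precisely when $W\overline{W}^\top=nI$, where $n=1+k_1+k_2$. The Bose--Mesner algebra $\mathfrak{A}$ is commutative, consists of normal matrices, and is closed under conjugate-transposition (since $A_1^\top=A_2$, $A_2^\top=A_1$, and $A_0,A_3$ are symmetric); hence its primitive idempotents $E_0,\dots,E_3$ are mutually orthogonal Hermitian projections simultaneously diagonalising every element of $\mathfrak{A}$. Writing $W=\sum_{i=0}^3\lambda_iE_i$, where $\lambda_i=\sum_{j=0}^3 w_jP_{ij}$ is the $i$-th row of \eqref{P} evaluated at $(1,w_1,w_2,w_3)$, we obtain $\overline{W}^\top=\sum_i\overline{\lambda_i}E_i$ and therefore $W\overline{W}^\top=\sum_i|\lambda_i|^2E_i$. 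Comparing with $nI=\sum_i nE_i$, the first step is the equivalence
\[
W\ \text{is a complex Hadamard matrix}\iff |\lambda_i|^2=n\quad(i=0,1,2,3).
\]

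Next I would write the eigenvalues from \eqref{P} using $u=w_1+w_2$ and $v=w_1-w_2$: namely $\lambda_0=1+\tfrac{k_1}{2}u+k_2w_3$, $\lambda_3=1+\tfrac{s}{2}u-(s+1)w_3$, and $\lambda_{1,2}=A\pm B$ with $A=1+\tfrac r2u-(r+1)w_3$ and $B=\tfrac{bi}{2}v$. Since $\lambda_1,\lambda_2$ differ only by the sign of $B$, I would replace $|\lambda_1|^2=|\lambda_2|^2=n$ by the sum $|A|^2+|B|^2=n$ and the difference $\operatorname{Re}(A\overline{B})=0$; the identity $|u|^2+|v|^2=4$ turns the sum into $|A|^2+b^2-\tfrac{b^2}{4}|u|^2=n$. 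Expanding $|\lambda_0|^2=n$, $|\lambda_3|^2=n$, and this last equation in terms of the real moments $X=|u|^2$, $Y=\operatorname{Re}u$, $Z=\operatorname{Re}w_3$, $T=\operatorname{Re}(u\overline{w_3})$ produces three inhomogeneous linear equations in $X,Y,Z,T$ whose coefficients are polynomials in $k_1,k_2,r,s,b$, while $\operatorname{Re}(A\overline{B})=0$ gives a fourth, bilinear, relation involving $v$.

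The heart of the argument is to solve this system together with the feasibility constraints on $(k_1,k_2,r,s,b)$ recorded in Lemma~\ref{lem:song} (integrality of $r,s$, parity of $k_1$, positivity of $b$, and the orthogonality relations of $P$ that tie $b^2$ and the multiplicities to the remaining parameters), remembering that $X,Y,Z,T$ must actually arise from unit complex numbers, so that $X\le4$, $X\ge Y^2$, $|Z|\le1$, and $T$ is compatible with a choice of $w_3$. I expect the three linear equations, compared against each other, to force the parameters into the asserted shape, in particular $r=0$, $s=-2a$, $k_2=2a-1$, $k_1=2a(2a-1)c$, and $b=2a\sqrt{c}$. I would then split according to $u$: when $u=0$ (so $w_2=-w_1$) and $w_3=1$, all equations collapse to $b^2=n$, which holds exactly when $c=1$ and leaves $v$ free, yielding the continuous family (i)(a); when $u\neq0$ the orthogonality relation together with the modulus equations pins $w_1,w_2,w_3$ to finitely many values, producing (i)(b),(c) and (ii)(d),(e).

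The main obstacle is precisely this solving-and-casework step. Once the unit-modulus conditions are reimposed the system is genuinely nonlinear, and the delicate point is to exploit the integrality dictated by Lemma~\ref{lem:song} to collapse the a priori continuum of real solutions to the listed discrete tuples (apart from the one true circle of solutions in (i)(a)), and to verify that the arithmetic singles out exactly $c=1$ and $(a,c)=(1,3)$, along with the sporadic value $a=2$ in (i)(c). The converse implication is then a routine verification that each tuple in (i)--(ii) satisfies $|\lambda_i|^2=n$ for $i=0,1,2,3$.
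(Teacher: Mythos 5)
Your opening reduction is correct and is exactly the paper's Lemma~\ref{lem:1}: since the $E_k$ are orthogonal projections and $P_{k,j'}=\overline{P_{k,j}}$, the matrix $W$ is complex Hadamard if and only if $|\lambda_k|^2=n$ for $k=0,1,2,3$. The change of variables $u=w_1+w_2$, $v=w_1-w_2$ and the splitting of $|\lambda_1|^2=|\lambda_2|^2=n$ into a sum and a difference equation is a reasonable reformulation. But from that point on the proposal does not prove the theorem: you explicitly defer the ``solving-and-casework step,'' and that step is the entire content of the result. Concretely, three linear equations in the four real moments $X,Y,Z,T$ cannot by themselves ``force'' $(k_1,k_2,r,s,b)=(2a(2a-1)c,2a-1,0,-2a,2a\sqrt{c})$; the parameter constraints come from a separate structural input, namely Song's classification (Lemma~\ref{lem:song}) of the feasible eigenmatrices into three cases, followed by a proof that cases (ii) and (iii) admit no solution (Lemmas~\ref{lem:case2} and~\ref{lem:case3}) and that case (i) forces $k_2=2a-1$, $k_1=2a(2a-1)c$, $b=2a\sqrt{c}$ by divisibility and integrality of multiplicities (Lemma~\ref{lem:cmg}). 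None of this case elimination appears in your outline.

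The second missing ingredient is the mechanism that collapses the continuum of candidate $(w_1,w_2,w_3)$ to the listed discrete families. The paper gets this from the factorizations
\[
e_1-e_2=bi(w_1-w_2)(w_3-1)(w_1w_2-w_3),\qquad
e_0-e_3=a((2a-1)c+1)(w_1+w_2)\,g_1(w_1,w_2,w_3),
\]
which immediately give the dichotomies $w_3\in\{1,w_1w_2\}$ and ($w_2=-w_1$ or $g(w_1,w_2)=0$), and then from explicit quartics $f_1,f_2$ lying in the relevant ideals whose discriminants are multiples of $L=(2a-1)c^2-2(a+1)c-1$; the requirement $L<0$ combined with integrality is what singles out $c=1$ and $(a,c)=(1,3)$, and a short additional argument produces the sporadic case $a=2$. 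Your moment-variable system would have to reproduce an equivalent analysis, including handling the degenerate one-parameter family (i)(a) where the system has a whole circle of solutions, and you give no indication of how the arithmetic (the analogue of $L$ and its sign) would emerge from comparing your linear equations. As it stands the proposal is a correct setup plus an accurate description of the difficulty, not a proof.
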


\begin{remark}
J{\o}rgensen \cite[Theorem 8]{J2} characterized association schemes
in Theorem~\ref{thm:main} (i) in terms of a Bush-type Hadamard matrix
of order $4a^2$ whose off-diagonal blocks are skew-symmetric.
Such a Bush-type Hadamard matrix can be constructed if there is a
Hadamard matrix of order $2a$ (see \cite[Corollary 5 (ii)]{Kh}).
Note that, if $a$ is a power of $2$, then
such an association scheme 
can also be constructed from a Galois ring of characteristic $4$ in \cite[Theorem 9]{IMY}.
An association scheme with the first eigenmatrix 
in Theorem~\ref{thm:main} (ii)
is given by 
as08[6] in \cite{H}.
\end{remark}

The organization of the paper is as follows.
In Section~\ref{sec:2},
we consider matrices which belong to the Bose--Mesner algebra of a commutative association scheme, and
we give a necessary and sufficient condition that such a matrix is a complex Hadamard matrix.
We also introduce a result of S.~Y.~Song (\cite{S})
which describes the eigenmatrices of nonsymmetric $3$-class association schemes.
In Section~\ref{sec:3}, we classify
complex Hadamard matrices attached to self-dual fissions of a complete multipartite graph.
In Section~\ref{sec:4}, we show that the Bose--Mesner algebra of
an association scheme in the remaining cases of Song's description
does not contain a complex Hadamard matrices.

All the computer calculations in this paper were performed with the help of Magma \cite{magma}.

\section{Association schemes and complex Hadamard matrices}\label{sec:2}
In this section,
we consider matrices
belonging to the Bose--Mesner algebra of a commutative association scheme.
Assuming that all
entries are complex numbers of absolute value $1$, 
we find conditions under which such a matrix is a complex Hadamard matrix.
We refer the reader to \cite{BI,BCN} for undefined terminology in the
theory of association schemes.

Let $X$ be a finite set with $n$ elements, and let
$(X,\{R_i\}_{i=0}^d)$ be a commutative association scheme 
with the first eigenmatrix 
$P=(P_{i,j})_{\substack{0\leq i\leq d\\ 0\leq j\leq d}}$.
We let $\mathfrak{A}$ denote the Bose--Mesner algebra
spanned by the adjacency matrices $A_0,A_1,\ldots,A_d$ of $(X,\{R_i\}_{i=0}^d)$.
Then the adjacency matrices are expressed as
\begin{equation}\label{eq:pij}
A_j=\sum_{k=0}^d P_{k,j}E_k \quad (j=0,1,\ldots,d),
\end{equation}
where $E_0=\frac{1}{n}J,E_1,\ldots,E_d$ are the primitive idempotents of $\mathfrak{A}$.
Set $m_k=\rank E_k$.
Let $w_0=1$ and $w_1,\ldots,w_d$ be complex numbers of absolute value $1$.
For each $j\in\{0,1,\dots,d\}$, define $j'$ by $A_{j'}=A_j^\top$.

Set
\begin{equation}\label{eq:WW}
W=\sum_{j=0}^d w_jA_j\in\mathfrak{A}.
\end{equation}
Let $X_0=1$ and let $X_j$ ($1\leq j\leq d$) be indeterminates.
For $k=0,1,\ldots,d$, let $e_k$ be the polynomial in $X_1,\dots,X_d$ defined by
\begin{equation}\label{ek}
e_k
=X_1\cdots X_d\left( \left(\sum_{j=0}^dP_{k,j}X_j\right)\left(\sum_{j=0}^d\frac{P_{k,j'}}{X_j}\right)-n\right).
\end{equation}

\begin{lem}\label{lem:1}
The following statements are equivalent.
\begin{itemize}
\item[\rm{(i)}] the matrix $W$ defined by {\rm(\ref{eq:WW})} is a complex Hadamard matrix,
\item[\rm{(ii)}] the sequence $(w_j)_{1\leq j\leq d}$ is a common zero of $e_k$ $(k=0,1,\ldots,d)$.
\end{itemize}
\end{lem}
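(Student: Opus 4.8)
The plan is to diagonalize $W$ with respect to the primitive idempotents and then read off the Hadamard condition eigenvalue by eigenvalue. First I would dispose of the unimodularity of the entries: since $\sum_{j=0}^d A_j = J$ (the all-ones matrix) and the $A_j$ are $\{0,1\}$-matrices with pairwise disjoint support, every entry of $W$ equals one of $w_0,w_1,\dots,w_d$, each of absolute value $1$. Hence the only substantive part of statement (i) is the matrix identity $W\overline{W}^\top = nI$, and the whole proof reduces to analyzing that identity.

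Next I would substitute \eqref{eq:pij} into \eqref{eq:WW} and interchange the order of summation to get $W = \sum_{k=0}^d \lambda_k E_k$, where $\lambda_k = \sum_{j=0}^d P_{k,j} w_j$. To handle $\overline{W}^\top$, I would use two facts: the primitive idempotents of a commutative scheme are Hermitian (they are the orthogonal projections onto the common eigenspaces of the commuting normal matrices $A_j$), and $\overline{P_{k,j}} = P_{k,j'}$. I would derive the latter by conjugate-transposing $A_j = \sum_k P_{k,j} E_k$ and comparing with $A_{j'} = A_j^\top = \overline{A_j}^\top$, valid because the $A_j$ are real. Consequently $\overline{W}^\top = \sum_k \overline{\lambda_k} E_k$, and since $W$ and $\overline{W}^\top$ are written in the same idempotent basis their product collapses via $E_k E_l = \delta_{kl} E_k$ to $W\overline{W}^\top = \sum_{k=0}^d |\lambda_k|^2 E_k$.

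Finally I would compare this with $nI = \sum_{k=0}^d n E_k$. As the $E_k$ are nonzero orthogonal idempotents, hence linearly independent, the identity $W\overline{W}^\top = nI$ is equivalent to the scalar system $|\lambda_k|^2 = n$ for every $k$. To connect this with the $e_k$, I would invoke $|w_j|=1$, i.e. $1/w_j = \overline{w_j}$, to see that substituting $X_j = w_j$ turns $\sum_j P_{k,j} X_j$ into $\lambda_k$ and $\sum_j P_{k,j'}/X_j$ into $\sum_j \overline{P_{k,j}}\,\overline{w_j} = \overline{\lambda_k}$, so that $e_k(w_1,\dots,w_d) = (w_1\cdots w_d)(|\lambda_k|^2 - n)$. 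Because $|w_j|=1$ forces $w_1\cdots w_d \neq 0$, each $e_k$ vanishes at $(w_1,\dots,w_d)$ precisely when $|\lambda_k|^2 = n$, and the equivalence of (i) and (ii) follows.

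The computation is essentially routine; the only points demanding care are the conjugation rule $\overline{P_{k,j}} = P_{k,j'}$ and the role of the prefactor $X_1\cdots X_d$. That factor is inserted exactly so that $e_k$ is a genuine polynomial rather than a Laurent expression, and the hypothesis $|w_j|=1$ does double duty: it lets me replace $1/w_j$ by $\overline{w_j}$ (so the second sum really is $\overline{\lambda_k}$) and it guarantees the prefactor is nonzero, so no spurious zeros are introduced. I expect no genuine obstacle beyond keeping this bookkeeping straight.
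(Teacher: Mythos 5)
Your proposal is correct and follows essentially the same route as the paper: expand $W$ and $\overline{W}^\top$ in the basis of primitive idempotents using \eqref{eq:pij}, collapse the product via $E_kE_l=\delta_{kl}E_k$, and compare coefficients with $nI=\sum_k nE_k$, using $|w_j|=1$ to identify $\overline{w_j}$ with $1/w_j$ and to see that the prefactor $X_1\cdots X_d$ introduces no spurious zeros. The only cosmetic difference is that you pass through the conjugation rule $\overline{P_{k,j}}=P_{k,j'}$ to interpret the second factor as $\overline{\lambda_k}$, whereas the paper simply writes $\overline{W}^\top=\sum_j w_j^{-1}A_{j'}$ and expands $A_{j'}$ directly.
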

\begin{proof}
By (\ref{eq:pij}), (\ref{eq:WW}) we have
\begin{align}
W&=\sum_{k=0}^d(\sum_{j=0}^dw_jP_{k,j})E_k,\label{eq:pijE}\\
\overline{W}^\top&=\sum_{j=0}^d\frac{1}{w_j}A_{j'} \qquad (A_{j'}=A_j^\top)\nonumber\\
&=\sum_{k=0}^d(\sum_{j=0}^d\frac{1}{w_j}P_{k,j'})E_k.\label{eq:tpijE}
\end{align}
By (\ref{eq:pijE}), (\ref{eq:tpijE}) we have
\[
W\overline{W}^\top
=\sum_{k=0}^d\left((\sum_{j=0}^dw_jP_{k,j})(\sum_{j=0}^d\frac{1}{w_j}P_{k,j'})\right)E_k.
\]
Therefore, the equivalence of (i) and (ii) follows.
\end{proof}

Let $\mathfrak{X}=(X,\{R_i\}_{i=0}^3)$ be a nonsymmetric $3$-class association scheme
with the first eigenmatrix (\ref{P}).
Consider the polynomial ring 
\begin{equation} \label{RC}
R=\mathbb{C}[X_1,X_2,X_3].
\end{equation}
Then by (\ref{ek}) we have the following.
\begin{align}
 e_0&=\left(1+\frac{k_1}{2}(X_1+X_2)+k_2X_3\right)
 \left(X_1X_2X_3+\frac{k_1}{2}(X_1+X_2)X_3+k_2X_1X_2\right)\nonumber\\
    &\quad -nX_1X_2X_3,\label{eq:0}\displaybreak[0]\\
 e_1&=\left(1+\frac{r+bi}{2}X_1+\frac{r-bi}{2}X_2-(r+1)X_3\right)\nonumber\\
 &\quad\times\left(X_1X_2X_3+(\frac{r+bi}{2}X_1+\frac{r-bi}{2}X_2)X_3-(r+1)X_1X_2\right) \nonumber\\
 &\quad-nX_1X_2X_3,\label{eq:1}\displaybreak[0]\\
 e_2&=\overline{e_1},\label{eq:2}\displaybreak[0]\\
 e_3&=\left(1+\frac{s}{2}(X_1+X_2)-(s+1)X_3\right)\nonumber\\
 &\quad\times\left(X_1X_2X_3+\frac{s}{2}(X_1+X_2)X_3-(s+1)X_1X_2\right)
   -nX_1X_2X_3.\label{eq:3}
\end{align}

\begin{lem}\label{lem:0430}
We have the following.
\begin{align}
4(e_0-e_3)&= \left( 2(k_1k_2+s^2+s)(X_1+X_2)+4(k_2+s+1)X_1X_2 \right)X_3^2 \nonumber\\
&\quad +\left( k_1^2(X_1+X_2)^2
     +2(k_1-s)(X_1X_2+1)(X_1+X_2) \right.\nonumber\\
&\qquad  \left.  -s^2(X_1^2+X_2^2)+2(2k_2^2-3s^2-4s-2)X_1X_2\right)X_3 \nonumber\\
&\quad +2( (k_1k_2+s(s+1))(X_1+X_2)+2(k_2+s+1) )X_1X_2, \label{eq:gen-e0e3}\displaybreak[0]\\
e_1-e_2&=-bi(X_1-X_2) 
\nonumber\\
&\qquad \times
\left( (r+1)(X_3^2+X_1X_2)-(X_1X_2+r(X_1+X_2)+1)X_3 \right).\label{eq:gen-e1e2}\displaybreak[0]
\end{align}
\end{lem}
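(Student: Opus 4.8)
Both identities are polynomial equalities in $X_1,X_2,X_3$ over $\C$ (with $k_1,k_2,r,s,b,n$ as parameters), so the plan is to verify each by direct expansion of the defining expressions \eqref{eq:0}--\eqref{eq:3}. Throughout I would write the computation in terms of the elementary symmetric polynomials $\sigma_1=X_1+X_2$ and $\sigma_2=X_1X_2$, which keeps the number of monomials small and makes the final comparison transparent.

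For \eqref{eq:gen-e1e2} I would exploit $e_2=\overline{e_1}$, so that $e_1-e_2=e_1-\overline{e_1}$ is $2i$ times the imaginary part of $e_1$ (conjugation sending $i\mapsto-i$ and fixing $r,b,X_1,X_2,X_3$). Writing $\frac{r+bi}{2}X_1+\frac{r-bi}{2}X_2=\frac r2\sigma_1+\frac{bi}{2}(X_1-X_2)$, set
\[
A=1+\tfrac r2\sigma_1-(r+1)X_3,\qquad
B=\sigma_2X_3+\tfrac r2\sigma_1X_3-(r+1)\sigma_2,
\]
so that the two factors of $e_1$ are $A+\frac{bi}{2}(X_1-X_2)$ and $B+\frac{bi}{2}(X_1-X_2)X_3$. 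Expanding their product, the real part (together with the real term $-nX_1X_2X_3$) cancels in $e_1-\overline{e_1}$, leaving $e_1-e_2=bi(X_1-X_2)(B+AX_3)$. A one-line simplification of $B+AX_3$ then yields the bracketed factor of \eqref{eq:gen-e1e2}.

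For \eqref{eq:gen-e0e3} I would observe that $e_0$ and $e_3$ share the common form
\[
F(p,q)=\bigl(1+p\sigma_1+qX_3\bigr)\bigl(\sigma_2X_3+p\sigma_1X_3+q\sigma_2\bigr)-nX_1X_2X_3,
\]
with $(p,q)=(\frac{k_1}{2},k_2)$ giving $e_0$ and $(p,q)=(\frac s2,-(s+1))$ giving $e_3$. Expanding $F(p,q)$ and collecting by powers of $X_3$ produces coefficients $q(\sigma_2+p\sigma_1)$ for $X_3^2$, then $\sigma_2+p\sigma_1+p\sigma_1\sigma_2+p^2\sigma_1^2+q^2\sigma_2-n\sigma_2$ for $X_3^1$, and $q\sigma_2(1+p\sigma_1)$ for $X_3^0$. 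Forming the difference $e_0-e_3$ cancels the $n$-term, multiplying by $4$ clears the denominators coming from $\frac{k_1}{2},\frac s2$, and the resulting three coefficients, read off as polynomials in $\sigma_1,\sigma_2$, match the three lines of \eqref{eq:gen-e0e3}.

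The only genuine obstacle is the bookkeeping in the $X_3^1$ coefficient of $4(e_0-e_3)$, where the quadratic contributions $p^2\sigma_1^2$ and $q^2\sigma_2$ produce the $k_1^2$, $s^2$ and $k_2^2$ terms; matching this to the stated form requires rewriting $-s^2(X_1^2+X_2^2)$ via $X_1^2+X_2^2=\sigma_1^2-2\sigma_2$ and regrouping. Organizing the whole expansion by degree in $X_3$ and in $\sigma_1,\sigma_2$ makes this comparison routine, and both identities can equally be confirmed by a symbolic computation.
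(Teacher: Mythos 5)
Your proposal is correct and is essentially the paper's own approach: the paper's proof is simply ``Straightforward'' (a direct expansion, double-checked by Magma in the appendix), and your verification by expansion is exactly that computation carried out by hand. Your organization via $\sigma_1=X_1+X_2$, $\sigma_2=X_1X_2$ and the observation that $e_1-e_2=2i\,\mathrm{Im}(e_1)$ kills the real cross-terms cleanly, and the coefficients you obtain do match \eqref{eq:gen-e0e3} and \eqref{eq:gen-e1e2} after rewriting $X_1^2+X_2^2=\sigma_1^2-2\sigma_2$.
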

\begin{proof}
Straightforward.
\end{proof}

\begin{lem}\label{lem:2}
Let $(w_1,w_2,w_3)$ be a common zero of the polynomials $e_k$ {\rm(}$k=0,1,2,3${\rm)}.
Then $(w_2,w_1,w_3)$ is also a common zero of the polynomials $e_k$.
\end{lem}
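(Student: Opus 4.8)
The plan is to show that the map $(w_1,w_2,w_3)\mapsto(w_2,w_1,w_3)$, which swaps the first two coordinates, sends common zeros of the $e_k$ to common zeros. The natural approach is to examine how each polynomial $e_k$ transforms under the substitution $X_1\leftrightarrow X_2$. Let $\sigma$ denote the $\mathbb{C}$-algebra automorphism of $R=\mathbb{C}[X_1,X_2,X_3]$ defined by $\sigma(X_1)=X_2$, $\sigma(X_2)=X_1$, $\sigma(X_3)=X_3$. The claim will follow once I establish that $\sigma$ permutes the set $\{e_0,e_1,e_2,e_3\}$ up to the relation $e_2=\overline{e_1}$, so that the common zero locus is $\sigma$-invariant.

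First I would record the effect of $\sigma$ on each generator. The polynomials $e_0$ and $e_3$ in \eqref{eq:0} and \eqref{eq:3} depend on $X_1,X_2$ only through the symmetric combinations $X_1+X_2$ and $X_1X_2$, so each is fixed by $\sigma$; that is, $\sigma(e_0)=e_0$ and $\sigma(e_3)=e_3$. This is immediate from inspection, and it is also consistent with Lemma~\ref{lem:0430}, since the right-hand side of \eqref{eq:gen-e0e3} is manifestly symmetric in $X_1,X_2$. For $e_1$ in \eqref{eq:1}, applying $\sigma$ interchanges the roles of the coefficients $\frac{r+bi}{2}$ and $\frac{r-bi}{2}$, which is exactly the effect of complex conjugation on the coefficients. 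Hence $\sigma(e_1)=\overline{e_1}=e_2$ by \eqref{eq:2}, and likewise $\sigma(e_2)=e_1$.

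With these four identities in hand, the proof concludes formally. Suppose $(w_1,w_2,w_3)$ is a common zero, so $e_k(w_1,w_2,w_3)=0$ for $k=0,1,2,3$. Then for each $k$ we have
\[
e_k(w_2,w_1,w_3)=(\sigma e_k)(w_1,w_2,w_3),
\]
and since $\sigma$ merely permutes $\{e_0,e_3\}$ pointwise and swaps $e_1,e_2$, the right-hand side is one of $e_0,e_1,e_2,e_3$ evaluated at $(w_1,w_2,w_3)$, hence zero. Therefore $(w_2,w_1,w_3)$ is a common zero of all four polynomials.

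There is essentially no obstacle here: the only point requiring care is the bookkeeping that $\sigma$ acts on coefficients by conjugation in $e_1$, so that $\sigma(e_1)$ equals $e_2$ rather than $e_1$ itself. One should note that the $w_j$ are evaluated as complex numbers while $\sigma$ acts on the indeterminates only, so no conjugation of the $w_j$ is introduced; the identity $e_k(w_2,w_1,w_3)=(\sigma e_k)(w_1,w_2,w_3)$ is a purely formal consequence of the definition of $\sigma$. I expect the entire argument to fit in a few lines once the symmetry of $e_0,e_3$ and the conjugation-symmetry relating $e_1,e_2$ are observed.
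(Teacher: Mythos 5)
Your proof is correct and is essentially identical to the paper's one-line argument: $e_0$ and $e_3$ are symmetric in $X_1,X_2$, while swapping $X_1\leftrightarrow X_2$ interchanges $e_1$ and $e_2$ (since the swap has the same effect as conjugating the coefficients). The automorphism formalism you introduce is just a more verbose packaging of the same observation.
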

\begin{proof}
We have $e_j(X_2,X_1,X_3)=e_j(X_1,X_2,X_3)$ for $j=0,3$,
and $e_2(X_2,X_1,X_3)=e_1(X_1,X_2,X_3)$.
\end{proof}

S.~Y.~Song \cite[(5.3) Lemma]{S} showed the following.

\begin{lem}\label{lem:song}
Let $(1,m_1,m_1,m_2)$ denote the top row of the matrix $(1+k_1+k_2)P^{-1}$,
where $P$ is given by
\eqref{P}. Then one of the following holds.
\begin{itemize}
\item[\rm{(i)}] $(r,s,b^2)=(0,-(k_2+1),\frac{k_1(k_2+1)}{k_2})$, $m_1=\frac{(k_1+k_2+1)k_2}{2(k_2+1)}$,
\item[\rm{(ii)}] $(r,s,b^2)=(-(k_2+1),0,(k_2+1)(k_1+k_2+1))$, $m_1=\frac{k_1}{2(k_2+1)}$,
\item[\rm{(iii)}] $(r,s,b^2)=(-1,k_1,k_1+1)$, $m_1=\frac{(k_1+k_2+1)k_1}{k_1+1}$.
\end{itemize}
\end{lem}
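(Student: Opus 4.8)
The plan is to compute the top row of $(1+k_1+k_2)P^{-1}$ directly and show that it must coincide with one of the three forms (i)--(iii). The quantity $1+k_1+k_2$ is exactly $n$, the order of the scheme, since $n = \sum_i k_i = 1 + \frac{k_1}{2} + \frac{k_1}{2} + k_2$. The key structural fact about the first eigenmatrix $P$ of a commutative association scheme is the orthogonality relation $P^* D_m P = n D_k$, where $D_k = \mathrm{diag}(k_0,\dots,k_d)$ collects the valencies and $D_m = \mathrm{diag}(m_0,\dots,m_d)$ collects the multiplicities $m_k = \rank E_k$; equivalently the inverse is given by $(P^{-1})_{k,i} = \frac{m_k}{n\,k_i}\,\overline{P_{i,k}}$. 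In particular the top row of $nP^{-1}$ has entries $\frac{m_k}{k_i}\overline{P_{i,0}} = \frac{m_k}{k_i}$, so reading off the labelling in the statement we get $1 = \frac{m_0}{k_0}$, $m_1 = \frac{m_1}{k_1}$ interpreted through the valencies, and more usefully the multiplicities are recovered as the diagonal entries.

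\textbf{Step 1.} First I would record the valencies from the first column of $P$: reading \eqref{P}, $k_0=1$, $k_1=k_2=\frac{k_1}{2}$ (the valency of $R_1$ and $R_2$, which I will call $v$ to avoid clashing with the scheme parameter $k_1$), and $k_3 = k_2$. I would then use the column orthogonality $\sum_k m_k P_{k,i}\overline{P_{k,j}} = n\,k_i\,\delta_{ij}$ to set up equations for the multiplicities $m_0=1, m_1, m_2, m_3$. Since $P$ has a complex-conjugate pair of rows (rows $1$ and $2$), the two nonreal eigenspaces have equal multiplicity, forcing $m_1=m_2$; this is why the top row of $nP^{-1}$ has the shape $(1,m_1,m_1,m_2)$ as asserted.

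\textbf{Step 2.} The heart of the proof is the integrality and rationality constraints. The orthogonality relations give a system of polynomial equations in $r,s,b$ with the requirement that all $m_k$ be positive integers summing to $n$. I would substitute the entries of $P$ into $\sum_k m_k = n$ together with the trace-type relation $\sum_k m_k \theta_k = 0$ for each nontrivial eigenvalue $\theta_k$ (equivalently, $PE_0$ considerations forcing the all-ones eigenvector), and then use the fact that the nonreal eigenvalues $\frac12(r\pm bi)$ contribute a term $m_1\cdot bi$ whose imaginary part must cancel. Rationality of $b^2$ follows because $b^2$ is expressible through the rational data $r,s,k_1,k_2$. The three cases (i), (ii), (iii) then emerge according to how the algebraic constraints factor --- essentially, according to which of the relations $s = -(k_2+1)$, $r=-(k_2+1)$, or $r=-1$ holds, each pinning down the remaining parameters and the value of $m_1$.

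\textbf{The hard part will be} organizing the case split cleanly. The system is overdetermined once one imposes integrality, so the real work is showing that the solution variety consists of exactly these three branches and no others, rather than merely verifying that each of (i)--(iii) satisfies the orthogonality relations (which is a routine back-substitution). I expect the branching to come from a quadratic or cubic factorization of one governing equation --- likely the equation expressing $m_2$ (or $b^2$) as a rational function of $r$ and $s$ --- where the numerator factors into three pieces corresponding to the three cases. Confirming that these three factors exhaust the possibilities, and that the boundary or degenerate cases (e.g. $k_2 = 0$) do not produce genuine $3$-class schemes, is where care is needed; the positivity and integrality of each $m_k$ must be used to discard spurious algebraic solutions.
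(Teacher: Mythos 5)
First, a point of comparison: the paper does not prove this lemma at all --- it is quoted from Song \cite[(5.3) Lemma]{S}, so there is no internal proof to measure your attempt against. Your Step 1 is essentially right (the top row of $nP^{-1}$ consists of the multiplicities $m_0,\dots,m_3$, and the conjugate pair of rows of $P$ forces $m_1=m_2$), apart from a transposition in your formula for $P^{-1}$: the correct statement is $(P^{-1})_{k,i}=\frac{m_i}{n\,k_k}\overline{P_{i,k}}$, so the multiplicities appear as the top \emph{row} entries, not via division by valencies and not on the diagonal.

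The genuine gap is in Step 2, and it is not merely that the computation is left undone: the toolkit you propose --- column orthogonality, the trace relations $\sum_k m_kP_{k,j}=0$ for $j\neq 0$, the sum $\sum_k m_k=n$, and positivity/integrality of the $m_k$ --- provably cannot yield the trichotomy. Those conditions are satisfied by taking \emph{any} strongly regular graph with integer eigenvalues $k_1,r,s$ whose $r$-eigenspace has even dimension $f$, setting $m_1=f/2$ and $b^2=nk_1/f$. Concretely, for the $4\times 4$ lattice graph one gets $(k_1,k_2,r,s,b^2)=(6,9,2,-2,16)$ with $m_1=3$, $m_3=9$: every relation you list holds with positive integer multiplicities, yet $(r,s,b^2)$ matches none of (i)--(iii). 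So there is no ``quadratic or cubic factorization of one governing equation'' at the level of the spectral data that produces exactly three branches. The actual content of Song's lemma is that a strongly regular graph admitting a nonsymmetric fission of one of its classes into $R_1,R_2=R_1^\top$ must be imprimitive (a complete multipartite graph or a disjoint union of cliques, giving cases (i)/(ii) and (iii) respectively); establishing that requires the integrality and nonnegativity of the intersection numbers $p_{ij}^k$ of the fission (equivalently, a combinatorial analysis of the skew-symmetric matrix $A_1-A_2$), which your proposal never invokes. As written, the argument would not close.
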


In Lemma~\ref{lem:song},
(i) and (ii) are nonsymmetric fissions of a complete multipartite graph,
(i) is self-dual, and (ii) is non self-dual.
Also, (iii) is a nonsymmetric fission of a disjoint union of complete graphs.

\begin{lem}\label{lem:cmg}
Suppose that {\rm(i)} in {\rm Lemma~\ref{lem:song}} holds.
Define $a=-s/2$. 
Then
\begin{equation}\label{eq:k2a}\\
k_2=2a-1,
\end{equation}
and $a$ is a positive integer.
Moreover, there exists a positive integer $c$ such that
\begin{align}
k_1&=2a(2a-1)c,\label{eq:0312-1}\\
b&=2a\sqrt{c}.\label{eq:b-val}
\end{align}
Set
\begin{equation}\label{eq:0312-2}
L=(2a-1)c^2-2(a+1)c-1.
\end{equation}
Then we have the following.
\begin{enumerate}
\item[\rm{(i)}] $L\not=0$,
\item[\rm{(ii)}] Suppose that $L<0$.
Then one of the following holds.
 \begin{enumerate}
 \item[\rm{(a)}] $c=1$,
 \item[\rm{(b)}] $(a,c)=(1,3)$, that is, $k_1=6$, $k_2=1$, $b=2\sqrt{3}$.
 \end{enumerate}
\end{enumerate}
\end{lem}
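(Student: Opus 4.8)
The plan is to read off the parameters from Lemma~\ref{lem:song}(i) and then to obtain every integrality assertion from two facts: by \eqref{eq:pij} the entries of $P$ are eigenvalues of the $0$--$1$ adjacency matrices $A_j$, hence algebraic integers, and the multiplicity $m_1$ is a positive integer.

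First I would record that case~(i) gives $r=0$, $s=-(k_2+1)$ and $b^2=\frac{k_1(k_2+1)}{k_2}$. Since $a=-s/2=(k_2+1)/2$, equation \eqref{eq:k2a} is immediate. To see that $a$ is a positive integer, observe that $P_{3,1}=s/2$ is an eigenvalue of the integer matrix $A_1$; being rational and an algebraic integer it lies in $\Z$, so $s$ is even, $k_2=2a-1$ is odd, and $a=(k_2+1)/2$ is a positive integer (positive since $k_2\geq1$).

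Next, and this is the crux, I would establish \eqref{eq:0312-1} and \eqref{eq:b-val} by showing that $c:=k_1/\bigl(2a(2a-1)\bigr)$ is a positive integer. Writing $k_2+1=2a$, integrality of $m_1=\frac{(k_1+2a)(2a-1)}{4a}$ together with $\gcd(4a,2a-1)=1$ forces $4a\mid k_1+2a$, whence $k_1=2a\,t$ with $t$ odd. Then $\frac{b^2}{4}=\bigl(\frac{bi}{2}\bigr)\bigl(-\frac{bi}{2}\bigr)$ is a product of eigenvalues of $A_1$, hence a rational algebraic integer, hence an integer; substituting $k_1=2at$ gives $\frac{a^2t}{2a-1}\in\Z$, and $\gcd(a^2,2a-1)=1$ yields $(2a-1)\mid t$. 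Thus $c=t/(2a-1)$ is a positive integer, and it is odd because $t$ and $2a-1$ are odd; moreover $b^2=(2a)^2c$, which is \eqref{eq:b-val}. I expect this to be the main obstacle: a single integrality condition does not suffice (for instance $m_2=(2a-1)c$ is automatically integral once $c$ is), so one must combine the multiplicity condition with the algebraic integrality of the non-real eigenvalues, and the oddness of $c$ --- absent from the statement but needed for part~(ii) --- has to be extracted here.

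Finally I would treat $L$ in \eqref{eq:0312-2}. For~(i), if $L=0$ then $c$ is a root of $(2a-1)X^2-2(a+1)X-1$, whose discriminant is $4a(a+4)$; since $(a+1)^2<a(a+4)<(a+2)^2$ for every positive integer $a$, the integer $a(a+4)$ is not a perfect square, so $c$ would be irrational, contradicting $c\in\Z$. For~(ii), as $L$ is an integer, $L<0$ forces $(2a-1)c^2-2(a+1)c\leq0$, i.e.\ $(2a-1)c\leq2(a+1)$ after dividing by $c>0$. If $c=1$ we are in case~(a). Otherwise $c\geq3$ since $c$ is odd, so $3(2a-1)\leq2(a+1)$ gives $a=1$, and then $(2a-1)c\leq2(a+1)$ reads $c\leq4$, leaving $c=3$; this is case~(b), where $(k_1,k_2,b)=(6,1,2\sqrt3)$.
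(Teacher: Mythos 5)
Your proof is correct. The skeleton matches the paper's (get $k_2=2a-1$ and $a\in\Z$ from the eigenvalue $s/2$ of $A_1$, show $c$ is a positive integer, then handle $L=0$ via the discriminant $4a(a+4)$ and $L<0$ via an integer bound on $(2a-1)c$), but the two divisibility steps that make $c$ an integer are obtained differently, and this changes how part (ii) is finished. The paper gets $(k_2+1)\mid k_1$ from the combinatorial fact that the symmetrization is a complete multipartite graph with parts of size $k_2+1$ (so $k_2+1$ divides $n$), and $k_2\mid k_1$ from $b^2=k_1(k_2+1)/k_2\in\Z$; coprimality then gives $2a(2a-1)\mid k_1$. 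You instead use integrality of the multiplicity $m_1=\frac{(k_1+2a)(2a-1)}{4a}$ to get $4a\mid k_1+2a$, hence $k_1=2at$ with $t$ odd, and then $b^2/4=\frac{a^2t}{2a-1}\in\Z$ to get $(2a-1)\mid t$. Your route buys the extra fact that $c$ is odd, which lets you dispatch part (ii) uniformly ($c\ne1$ forces $c\ge3$, hence $a=1$, $c=3$); the paper, lacking this, bounds $c\le\frac{a+1+\sqrt{a(a+4)}}{2a-1}$, enumerates the finitely many pairs $(a,c)$ with $c>1$, and eliminates $(1,2),(1,4),(2,2)$ by computing $m_1=\frac32,\frac52,\frac{21}{2}\notin\Z$ --- so both arguments ultimately rest on the same integrality of $m_1$, just invoked at opposite ends. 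Both are complete; yours is slightly more uniform, the paper's slightly more elementary in that it reads the divisibility off the visible part structure rather than a gcd analysis.
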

\begin{proof}
Since $-a$ is an eigenvalue of the adjacency matrix $A_1$,
$a$ is an integer.
Since $s=-(k_2+1)$, we have (\ref{eq:k2a}).
The symmetrization is the 
complete multipartite graph of the part size $k_2+1$.
Thus, $k_2+1$ is a divisor of $n=1+k_1+k_2$, and hence
$k_2+1$ is a divisor of $k_1$.
Since $b^2=k_1(k_2+1)/k_2$ is an integer, $k_2$ is a divisor of $k_1$.
Hence $k_2(k_2+1)$ is a divisor of $k_1$.
Thus, there exists a positive
integer $c$ such that (\ref{eq:0312-1}) holds.
Since $b$ is a positive real number, we have (\ref{eq:b-val}).

(i) Suppose that $L=0$.
We regard (\ref{eq:0312-2}) as a quadratic of $c$.
Since the discriminant is $a(a+4)$,
there exists a positive integer $m$ such that $a(a+4)=m^2$.
Then $(a+2+m)(a+2-m)=4$.
This contradicts a positive integer $a$.

(ii) Since $L<0$, we have
\[
1\leq c\leq\frac{a+1+\sqrt{a(a+4)}}{2a-1}.
\]
Then we have the following:
If $a=1$ then $c\in\{1,2,3,4\}$. If $a=2$ then $c\in\{1,2\}$. If $a\geq3$ then $c=1$.
For $(a,c)=(1,2), (1,4), (2,2)$ we have $m_1=\frac32, \frac52, \frac{21}{2}$ by (i) in Lemma~\ref{lem:song}, respectively.
Hence we have the assertions (a) or (b).
\end{proof}

To conclude this section,
we note that the matrices
described in Theorem~\ref{thm:main} are indeed complex Hadamard
matrices.
By Lemma~\ref{lem:1}, it suffices to show that
$(w_1,w_2,w_3)$ is common zero of the polynomials (\ref{eq:0})--(\ref{eq:3}), and
it is easy to do this.

\section{Self-dual fissions of a complete multipartite graph} \label{sec:3}

Throughout this section,
we suppose that case (i) in Lemma~\ref{lem:song} holds,
and that 
the matrix (\ref{eq:W}) is a complex Hadamard matrix with $w_1\not=w_2$.
By Lemma~\ref{lem:1}, $(w_1,w_2,w_3)$ is a common zero of the polynomials $e_k$ ($k=0,1,2,3$).
\begin{lem}\label{lem:0426}
We have the following.
\begin{itemize}
\item[{\rm(i)}] $w_3=1$ or $w_3=w_1w_2$.
\item[{\rm(ii)}] $w_2=-w_1$ or $g(w_1,w_2)=0$,
where
\begin{align}
g(X_1,X_2)&=
2(X_1X_2+1)+((2a-1)c-1)(X_1+X_2).
\label{eq:g}
\end{align}
\item[{\rm(iii)}]
If $w_1w_2=1$, then $w_2=-w_1=\pm i$.
\end{itemize}
\end{lem}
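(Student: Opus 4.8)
The plan is to extract (i) and (ii) directly from the factored differences in Lemma~\ref{lem:0430}, evaluated at the common zero $(w_1,w_2,w_3)$ and specialized to case (i) of Lemma~\ref{lem:song}, and then to settle (iii) by feeding one further equation into the dichotomy of (ii). Throughout I substitute the values from Lemma~\ref{lem:cmg}, namely $r=0$, $s=-2a$, $k_2=2a-1$, $k_1=2a(2a-1)c$, $b=2a\sqrt c$, and I single out the identity $k_2+s+1=0$. For (i) I start from \eqref{eq:gen-e1e2}: with $r=0$ the bracketed quadratic in $X_3$ is $X_3^2-(X_1X_2+1)X_3+X_1X_2=(X_3-1)(X_3-X_1X_2)$, so
\[
e_1-e_2=-bi(X_1-X_2)(X_3-1)(X_3-X_1X_2).
\]
Evaluating at $(w_1,w_2,w_3)$ and using $b>0$ and $w_1\neq w_2$ kills the first two factors, forcing $(w_3-1)(w_3-w_1w_2)=0$, which is (i).

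For (ii) I specialize \eqref{eq:gen-e0e3}. Since $k_2+s+1=0$ the $X_1X_2$-terms of the extreme coefficients drop out, and after substituting the remaining parameters every coefficient shares the factor $4a((2a-1)c+1)(X_1+X_2)$; the leftover quadratic in $X_3$ is
\[
B=(2a-1)(X_3^2+X_1X_2)+\bigl(a((2a-1)c-1)(X_1+X_2)+X_1X_2+1\bigr)X_3 .
\]
As $a>0$ and $(2a-1)c+1>0$, evaluating $e_0-e_3=0$ gives $w_1+w_2=0$ or $B=0$. The decisive step is to insert the two values of $w_3$ allowed by (i): substituting $X_3=1$ collapses $B$ to $a\,g(X_1,X_2)$, and substituting $X_3=X_1X_2$ collapses it to $w_1w_2\cdot a\,g(X_1,X_2)$, with $g$ as in \eqref{eq:g}. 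Since $a\neq0$ and $w_1w_2\neq0$, in either branch $B=0$ is equivalent to $g(w_1,w_2)=0$, giving (ii).

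For (iii) assume $w_1w_2=1$. Then $|w_1|=|w_2|=1$ gives $w_2=\overline{w_1}$, so $t:=w_1+w_2$ is real, and by (i) we have $w_3=w_1w_2=1$ in either branch. By (ii) either $w_2=-w_1$, whence $w_1^2=-1$ and $w_1=\pm i$ as wanted, or $g(w_1,w_2)=0$; I expect this second branch to be the main obstacle. Here $g=0$ with $w_1w_2=1$ reads $4+((2a-1)c-1)t=0$, which already forces $(2a-1)c\neq1$ and $t=-4/((2a-1)c-1)$. To eliminate it I would use an equation not yet exploited, for instance $e_3=0$ from \eqref{eq:3}: with $w_3=1$ and $w_1w_2=1$ both of its factors equal $a(2-t)$ and its last term equals $n=2a(1+(2a-1)c)$, so $e_3=0$ becomes $a(2-t)^2=2(1+(2a-1)c)$. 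Substituting the value of $t$ and clearing denominators reduces this, after dividing by $2a-1$, to $(2a-1)c^2-2(a+1)c-1=0$, that is $L=0$ in the notation of \eqref{eq:0312-2}. This contradicts Lemma~\ref{lem:cmg}(i), so the branch $g=0$ is impossible and $w_2=-w_1=\pm i$.
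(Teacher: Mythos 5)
Your proof is correct and takes essentially the same route as the paper: parts (i) and (ii) come from the same factorizations of $e_1-e_2$ and $e_0-e_3$ (your polynomial $B$ is the paper's $g_1$, and you have the correct coefficient $a((2a-1)c-1)$, matching the identity $g_1(X_1,X_2,1)=ag(X_1,X_2)$, whereas the displayed formula \eqref{eq:g1} carries a sign typo), and part (iii) likewise combines $g(w_1,1/w_1)=0$ with one further polynomial equation to force $L=0$ and contradict Lemma~\ref{lem:cmg}(i). The only deviation is your choice of $e_3=0$ instead of the paper's $e_1=0$ for that elimination; both give the same contradiction, and your symmetric choice makes the computation doable by hand.
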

\begin{proof}
After specializing $r=0$, $s=-2a$, $k_1=2a(2a-1)c$,
and $k_2=2a-1$ in (\ref{eq:gen-e0e3}) and (\ref{eq:gen-e1e2}),
we have 
\begin{align}
e_1-e_2&=bi(X_1-X_2)(X_3-1)(X_1X_2-X_3), \label{eq:e1e2} \\
e_0-e_3&=a((2a-1)c+1)(X_1+X_2)g_1(X_1,X_2,X_3), \label{eq:e0e3}
\end{align}
where
\begin{align}
g_1(X_1,X_2,X_3)&=(2a-1)(X_1X_2+X_3^2) \nonumber\\
&\quad +\left(X_1X_2+1+a((2a-1)c+1)(X_1+X_2)\right)X_3. \label{eq:g1}
\end{align}
From (\ref{eq:e1e2}) we have (i).

Observe, by (\ref{eq:g}) and (\ref{eq:g1}),
\begin{align}
X_1X_2g_1(X_1,X_2,1)&=g_1(X_1,X_2,X_1X_2)=aX_1X_2g(X_1,X_2).
\label{eq:ag1}
\end{align}
From (\ref{eq:e0e3}) we have $w_2=-w_1$ or $g_1(w_1,w_2,w_3)=0$.
Suppose that $g_1(w_1,w_2,w_3)=0$. Since $w_3=1$ or $w_3=w_1w_2$ by (i),
we have $g(w_1,w_2)=0$ by (\ref{eq:ag1}).

Finally, we suppose that $w_1w_2=1$.
Then $w_3=1$ by (i).
Suppose that $g(w_1,\frac{1}{w_1})=0$.
Then by (\ref{eq:g}) we have
\begin{equation}\label{eq:0520-1}
((2a-1)c-1)w_1^2+4w_1+(2a-1)c-1=0.
\end{equation}
Since $e_1(w_1,\frac{1}{w_1},1)=0$,
we have
\begin{equation}\label{eq:0520-2}
acw_1^4+2a((a-1)c+1)w_1^2+ac=0.
\end{equation}
From (\ref{eq:0520-1}) and (\ref{eq:0520-2}), by using the notation of (\ref{eq:0312-2}),
we have
\[
( (2a-1)c+1 )L=0.
\]
Since $a$ and $c$ are positive integers,
this contradicts Lemma~\ref{lem:cmg} (i).
By (ii) we have $w_2=-w_1$,
that is,
$w_1=\pm i$.
Therefore we have (iii).
\end{proof}

\begin{lem}\label{lem:7}
Suppose that $w_2=-w_1$.
Then we have {\rm (a)} in {\rm Theorem~\ref{thm:main}} {\rm(i)}.
\end{lem}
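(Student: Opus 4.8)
The plan is to feed the assumption $w_2=-w_1$ into the single equation $e_1(w_1,w_2,w_3)=0$ and to read off $c=1$ and the value of $w_3$ from it. Since $w_2=-w_1$ and $|w_1|=1$, we have $w_1\neq0$ and $w_1w_2=-w_1^2$, so by Lemma~\ref{lem:0426}~(i) either $w_3=1$ or $w_3=-w_1^2$, and I would dispose of the two possibilities in turn. Throughout I would use the parametrization of Lemma~\ref{lem:cmg}, specializing $e_1$ in (\ref{eq:1}) by $r=0$, $k_1=2a(2a-1)c$, $k_2=2a-1$, $b=2a\sqrt{c}$, and recording the two identities $b^2=4a^2c$ and $n=2a((2a-1)c+1)$, so that $n-b^2=2a(1-c)$.

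First I would treat $w_3=1$. Setting $X_2=-X_1$ and $X_3=1$ in the specialized $e_1$, both factors of the product reduce to the single monomial $bi\,w_1$, and the expression collapses to $w_1^2(n-b^2)=2a(1-c)\,w_1^2$. Since $w_1\neq0$, the equation $e_1=0$ forces $c=1$. Combined with $w_3=1$ and $w_2=-w_1$, this is exactly case (a) of Theorem~\ref{thm:main}~(i) with $w=w_1$.

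Next I would treat $w_3=-w_1^2$. Here the substitution $X_2=-X_1$, $X_3=-w_1^2$ turns the product into a difference of squares, and after factoring out $w_1^2$ one is left with
\[
(1+w_1^2)^2+2a(c-1)\,w_1^2=0.
\]
Dividing by $w_1^2$ and using $1/w_1=\overline{w_1}$ rewrites this as $(w_1+\overline{w_1})^2=-2a(c-1)$, whose left-hand side equals $(2\,\mathrm{Re}\,w_1)^2\ge0$. Since $a,c$ are positive integers, this simultaneously forces $c=1$ and $\mathrm{Re}\,w_1=0$; hence $w_1=\pm i$ and $w_3=-w_1^2=1$, and we again land in case (a).

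The two computations are routine once the identities for $b^2$ and $n$ are in place, and in the first case the collapse is immediate. The only genuinely nontrivial step, which I expect to be the crux, is the positivity argument in the second case: recognizing that dividing by $w_1^2$ turns the left-hand side into a nonnegative real number precisely because $|w_1|=1$ is what rules out $c>1$ and at the same time pins down $w_1=\pm i$.
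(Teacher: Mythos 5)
Your proof is correct and follows essentially the same route as the paper: specialize $e_1$ under $X_2=-X_1$ with $w_3=1$ (giving $2a(1-c)w_1^2=0$, hence $c=1$) and with $w_3=w_1w_2=-w_1^2$ (giving $w_1^4+2((c-1)a+1)w_1^2+1=0$). The only cosmetic difference is in the second subcase, where the paper rewrites the quartic as a quadratic in $w_3$ and derives the contradiction $w_3=1$, while you divide by $w_1^2$ and use $(w_1+\overline{w_1})^2\ge 0$ to land directly in case (a); both positivity arguments are equivalent.
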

\begin{proof}
By (i) in Lemma~\ref{lem:0426} we have $w_3=1$ or $w_3=w_1w_2$.

First suppose that $w_3=1$.
After specializing $X_2=-X_1$ and $X_3=1$, we have
\[
e_1=-2a(c-1)X_1^2.
\]
Hence $c=1$.
Therefore we have {\rm (a)} in {\rm Theorem~\ref{thm:main}} {\rm(i)}.

Secondly suppose that $w_3\not=1$.
Then $w_3=w_1w_2$,
that is, $w_3=-w_1^2$.
After specializing $X_2=-X_1$ and $X_3=-X_1^2$, we have
\[
e_1=X_1^2(X_1^4+2((c-1)a+1)X_1^2+1).
\]
Hence $w_3^2-2((c-1)a+1)w_3+1=0$.
Since $a$ and $c$ are positive integers,
we have $w_3=1$.
This contradicts $w_3\not=1$.
\end{proof}


\begin{lem}\label{lem:0518}
Suppose that $w_2\not=-w_1$ and either $w_1$ or $w_2$ is $-1$.
Then $(a,c)=(2,1)$ or $(a,c)=(1,3)$.
\end{lem}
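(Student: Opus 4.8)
The plan is to normalize the problem using its built-in symmetry and then read off the answer from the factorization already recorded in Lemma~\ref{lem:0426}~(ii). Since both the hypotheses and the conclusion are symmetric under interchanging the roles of $w_1$ and $w_2$ (the conclusion involves only the scheme parameters $a,c$), Lemma~\ref{lem:2} allows me to assume without loss of generality that $w_2=-1$: if instead $w_1=-1$, I pass to the common zero $(w_2,w_1,w_3)=(w_2,-1,w_3)$. Under this normalization the standing assumption $w_2\neq -w_1$ becomes simply $w_1\neq 1$.

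Next I would invoke Lemma~\ref{lem:0426}~(ii). Because $w_2\neq -w_1$, the only surviving alternative is $g(w_1,w_2)=0$, with $g$ the polynomial in \eqref{eq:g}. Specializing $X_2=-1$ in $g(X_1,X_2)=2(X_1X_2+1)+((2a-1)c-1)(X_1+X_2)$ and collecting terms, I expect a clean factorization
\[
g(w_1,-1)=\bigl((2a-1)c-3\bigr)(w_1-1).
\]
Since $w_1\neq 1$, this forces $(2a-1)c=3$.

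Finally, because $a$ and $c$ are positive integers, $2a-1$ is an odd positive divisor of $3$, so the only possibilities are $(2a-1,c)=(1,3)$ and $(2a-1,c)=(3,1)$, that is, $(a,c)=(1,3)$ and $(a,c)=(2,1)$, as asserted.

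I do not anticipate any genuine obstacle here: once Lemma~\ref{lem:0426}~(ii) is in hand the argument is a one-line specialization and an integer factorization. The only point requiring a moment's care is the reduction via Lemma~\ref{lem:2}, where one must verify that the normalized triple $(w_2,-1,w_3)$ still obeys the nondegeneracy hypothesis $w_1\neq -w_2$; this holds because the original $w_2\neq -w_1=1$. Everything else is the routine evaluation and factoring of $g$.
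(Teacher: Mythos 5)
Your proposal is correct and follows essentially the same route as the paper: reduce to $w_2=-1$ via the swap symmetry of Lemma~\ref{lem:2}, invoke Lemma~\ref{lem:0426}~(ii) to get $g(w_1,-1)=((2a-1)c-3)(w_1-1)=0$, and conclude $(2a-1)c=3$ from $w_1\neq 1$. The factorization and the final integer case analysis match the paper's argument exactly.
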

\begin{proof}
We may set $w_2=-1$ without loss of generality.
Since $w_2\not=-w_1$, by (ii) in Lemma~\ref{lem:0426} 
we have $g(w_1,w_2)=0$.
Then by $w_2=-1$ we have $((2a-1)c-3)(w_1-1)=0$.
By $w_1\not=1$ we have $(2a-1)c=3$.
Since $a$ and $c$ are positive integers,
we have the assertion.
\end{proof}

\begin{lem}\label{lem:9}
With the assumption in {\rm Lemma~\ref{lem:0518}}, 
suppose that $(a,c)=(2,1)$.
Then we have {\rm (c)} in {\rm Theorem~\ref{thm:main}} {\rm(i)}.
\end{lem}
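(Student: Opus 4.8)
We are in the situation of Lemma~\ref{lem:0518}
with $(a,c)=(2,1)$, so by Lemma~\ref{lem:cmg} the parameters are
$(k_1,k_2,r,s,b)=(12,3,0,-4,4)$ and $n=1+k_1+k_2=16$. We have
$w_2=-1$ (after possibly swapping $w_1$ and $w_2$, which is
legitimate by Lemma~\ref{lem:2}), and we know $w_1\neq\pm1$. The goal
is to pin down $w_1$ and $w_3$ and match them against case (c) of
Theorem~\ref{thm:main}~(i).

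**The plan.** The plan is to use the remaining defining polynomial
$e_1$ together with the two alternatives for $w_3$ supplied by
Lemma~\ref{lem:0426}~(i). First I would substitute the numerical
parameters $(a,c)=(2,1)$ into $e_1$ (as given by
\eqref{eq:1} after the specialization $r=0$, $s=-4$, $k_1=12$,
$k_2=3$), and set $X_2=-1$ to obtain a polynomial in the two
variables $X_1,X_3$. Then I would treat the two cases of
Lemma~\ref{lem:0426}~(i) separately.

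\emph{Case $w_3=1$.} Setting $X_2=-1$, $X_3=1$ in $e_1$ yields a
polynomial in $w_1$ alone; solving it (together with $|w_1|=1$, i.e.
$w_1\overline{w_1}=1$, using that $\overline{e_1}=e_2$ also
vanishes) should force $w_1=\frac{3\pm4i}{5}$. This produces the
solution $(w_1,w_2,w_3)=(\frac{3\pm4i}{5},-1,1)$. But I expect the
modulus condition to be decisive here: the bare polynomial equation
in $w_1$ will have extraneous real roots, and only the unit-modulus
roots survive, giving exactly the Pythagorean pair $\frac{3\pm4i}{5}$.
Note this is \emph{not} yet the form in (c); the point
$w_3=1$ together with $w_1w_2=-w_1$ means we are in the subcase
where $w_3=1\neq w_1w_2$, so I must check whether this is consistent
or whether it is excluded, and cross-reference which of the listed
tuples in (c) it corresponds to after relabeling.

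\emph{Case $w_3=w_1w_2=-w_1$.} Setting $X_2=-1$, $X_3=-X_1$ in $e_1$
gives a polynomial in $w_1$; again combining with $|w_1|=1$ should
yield $w_1=\frac{3\pm4i}{5}$ and hence $w_3=-w_1=\frac{-3\mp4i}{5}$,
producing $(w_1,w_2,w_3)=(\frac{3\pm4i}{5},-1,\frac{-3\mp4i}{5})$.
Finally, invoking Lemma~\ref{lem:2} to swap the roles of $w_1,w_2$
converts each solution into its companion
$(-1,\frac{3\pm4i}{5},\frac{-3\mp4i}{5})$, so that the full solution
set matches the four tuples displayed in case (c).

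**The main obstacle.** The hard part will be the bookkeeping between
the two sub-cases of $w_3$ and making sure the unit-circle constraint
is applied correctly: the polynomial $e_1(w_1,-1,w_3)$ is complex
(its coefficients involve $bi=4i$), so vanishing of $e_1$ is two real
equations, and I must use the companion vanishing of
$e_2=\overline{e_1}$ to extract $|w_1|=1$ cleanly rather than solving
a single quartic with spurious roots. I anticipate that after
substituting $w_3=1$ or $w_3=-w_1$, the system $e_1=e_2=0$ reduces to
a real quadratic in $\mathrm{Re}(w_1)$ whose only solution compatible
with $|w_1|=1$, $w_1\neq\pm1$ is $\mathrm{Re}(w_1)=\pm\frac35$,
forcing $\mathrm{Im}(w_1)=\pm\frac45$. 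Keeping the two independent
$\pm$ signs correctly paired across $w_1$ and $w_3$, and confirming
no additional solutions leak in, is the delicate step; everything
else is a direct specialization of polynomials already written down
in \eqref{eq:1} and \eqref{eq:2}.
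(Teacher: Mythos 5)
There is a genuine gap in your treatment of the case $w_3=1$. With $w_2=-1$, $k_1=12$, $k_2=3$, $b=4$, the specialization of $e_1$ at $X_2=-1$, $X_3=1$ is
\[
e_1=-4(X_1-1)^2,
\]
so the only root is $w_1=1$, which contradicts $w_1\neq 1$ (this follows from the standing hypothesis $w_2\neq -w_1$ of Lemma~\ref{lem:0518}, since $w_2=-1$). Hence this branch is vacuous: it contributes no solutions at all. You instead predict that it yields $w_1=\frac{3\pm4i}{5}$ and hence a tuple $\bigl(\frac{3\pm4i}{5},-1,1\bigr)$, which — as you yourself notice — does not appear in case (c) of Theorem~\ref{thm:main}~(i). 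You flag this mismatch but leave it unresolved ("I must check whether this is consistent or whether it is excluded"). Since the lemma asserts that the solution set is \emph{exactly} the four tuples in (c), failing to exclude this branch (and in fact predicting the wrong outcome for it) leaves the proof incomplete. The fix is simply to carry out the substitution and observe the contradiction, as above.

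Your second branch, $w_3=w_1w_2=-w_1$, matches the paper: the specialization gives $e_1=X_1(5X_1^2-6X_1+5)$, whence $5w_1^2-6w_1+5=0$ and $w_1=\frac{3\pm4i}{5}$, and Lemma~\ref{lem:2} supplies the swapped tuples. One further remark: the machinery you anticipate needing (treating $e_1=0$ as two real equations and invoking $e_2=\overline{e_1}$ to enforce $|w_1|=1$) is unnecessary here, because after substitution $e_1$ has real coefficients and its nonzero roots are automatically a conjugate pair of modulus $1$ (their product is $5/5=1$).
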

\begin{proof}
We may set $w_2=-1$ without of loss of generality.
By (\ref{eq:k2a})--(\ref{eq:b-val}) we have $k_1=12$, $k_2=3$, and $b=4$.
By (i) in Lemma~\ref{lem:0426} we have $w_3=1$ or $w_3=w_1w_2$.

First suppose that $w_3=1$.
After specializing $X_2=-1$, $X_3=1$, $k_1=12$, and $k_2=3$, we have
\[
e_1=-4(X_1-1)^2.
\]
Hence $w_1=1$.
This contradicts $w_1\not=1$.

Secondly suppose that $w_3\not=1$.
By (i) in Lemma~\ref{lem:0426} we have $w_3=w_1w_2$, that is, $w_3=-w_1$.
After specializing $X_2=-1$, $X_3=-X_1$, $k_1=12$, and $k_2=3$, we have
\[
e_1=X_1(5X_1^2-6X_1+5).
\]
Hence $5w_1^2-6w_1+5=0$, that is, $w_1=(3\pm4i)/5$.
Together with the use of Lemma~\ref{lem:2}, we have 
{\rm (c)} in {\rm Theorem~\ref{thm:main}} {\rm(i)}.
\end{proof}

\begin{lem}\label{lem:10}
With the assumption in {\rm Lemma~\ref{lem:0518}}, 
suppose that $(a,c)=(1,3)$.
Then we have  {\rm (d)} or {\rm (e)} in {\rm Theorem~\ref{thm:main}} {\rm(ii)}.
\end{lem}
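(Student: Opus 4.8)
The plan is to mirror the structure of the proof of Lemma~\ref{lem:9}, now specializing the parameters to $(a,c)=(1,3)$. By \eqref{eq:k2a}--\eqref{eq:b-val}, setting $a=1$ and $c=3$ gives $k_1=6$, $k_2=1$, and $b=2\sqrt{3}$, so $n=1+k_1+k_2=8$. As in Lemma~\ref{lem:9}, I may assume $w_2=-1$ without loss of generality, appealing to Lemma~\ref{lem:2} at the end to recover the symmetric solutions with the roles of $w_1$ and $w_2$ interchanged. By part~(i) of Lemma~\ref{lem:0426}, the analysis splits into the two cases $w_3=1$ and $w_3=w_1w_2$.

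First I would treat $w_3=1$. Specializing $X_2=-1$, $X_3=1$, $k_1=6$, $k_2=1$ (and $r=0$, $s=-2$, $b=2\sqrt{3}$) in \eqref{eq:1} collapses $e_1$ to a polynomial in the single variable $X_1$. Since $w_1$ is a zero of this polynomial and $|w_1|=1$, I expect it to factor so that the only admissible roots are $w_1=(1\pm2\sqrt{2}i)/3$, yielding case~(d) of Theorem~\ref{thm:main}~(ii). Next I would treat $w_3\neq1$, so that $w_3=w_1w_2=-w_1$ by part~(i) of Lemma~\ref{lem:0426}. Specializing $X_2=-1$, $X_3=-X_1$ in \eqref{eq:1} again reduces $e_1$ to a one-variable polynomial whose roots of absolute value $1$ I expect to be $w_1=\pm i$, giving $w_3=-w_1=\mp i$ and hence case~(e).

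The only genuine subtlety is to verify that the two candidate specializations exhaust the possibilities and that each surviving root actually has absolute value $1$, so that it is a legitimate value of a $w_j$; the spurious roots of the specialized $e_1$ must be discarded on this ground, exactly as the root $w_1=1$ was discarded in Lemma~\ref{lem:9}. Since each specialized $e_1$ is an explicit low-degree polynomial with integer (or $\sqrt{c}$-involving) coefficients, this is a routine factorization, and I do not anticipate a real obstacle. Having obtained the solutions with $w_2=-1$, a final application of Lemma~\ref{lem:2} produces the partner solutions with $w_1=-1$, so that together they give exactly the pairs listed in~(d) and~(e) of Theorem~\ref{thm:main}~(ii), completing the proof.
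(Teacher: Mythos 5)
Your proposal follows essentially the same route as the paper's proof: fix $w_2=-1$ without loss of generality, specialize $(k_1,k_2,b)=(6,1,2\sqrt{3})$, split on $w_3=1$ versus $w_3=w_1w_2=-w_1$ via Lemma~\ref{lem:0426}(i), reduce $e_1$ to a one-variable polynomial in each case (the paper gets $-(3X_1^2-2X_1+3)$ and $4X_1(X_1^2+1)$ respectively, whose admissible roots are exactly $(1\pm2\sqrt{2}i)/3$ and $\pm i$), and finish with Lemma~\ref{lem:2}. The plan is correct; the only remaining work is the routine computation of the two specialized polynomials, which comes out as anticipated.
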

\begin{proof}
We may set $w_2=-1$ without of loss of generality.
By (\ref{eq:k2a})--(\ref{eq:b-val}) we have $k_1=6$, $k_2=1$, and $b=2\sqrt{3}$.
By (i) in Lemma~\ref{lem:0426} we have $w_3=1$ or $w_3=w_1w_2$.

First suppose that $w_3=1$.
After specializing $X_2=-1$, $X_3=1$, $k_1=6$, and $k_2=1$, we have
\[
e_1=-(3X_1^2-2X_1+3).
\]
Hence $3w_1^2-2w_1+3=0$, that is, $w_1=(1\pm2\sqrt{2}i)/3$.
Together with the use of Lemma~\ref{lem:2}, we have 
{\rm (d)} in {\rm Theorem~\ref{thm:main}} {\rm(ii)}.

Secondly suppose that 
$w_3=w_1w_2$, that is, $w_3=-w_1$.
After specializing $X_2=-1$, $X_3=-X_1$, $k_1=6$, and $k_2=1$, we have
\[
e_1=4X_1(X_1^2+1).
\]
Hence $w_1=\pm i$.
Together with the use of Lemma~\ref{lem:2}, we have 
{\rm (e)} in {\rm Theorem~\ref{thm:main}} {\rm(ii)}.
\end{proof}

\begin{lem}\label{lem:11}
Suppose that $w_2\not=-w_1$ and $w_1,w_2\not=\pm1$.
Then we have {\rm (b)} in {\rm Theorem~\ref{thm:main}} {\rm(i)}.
\end{lem}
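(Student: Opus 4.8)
The plan is to exploit the two remaining constraints most directly. We are in the generic case $w_2\neq -w_1$ and $w_1,w_2\neq\pm1$, so by Lemma~\ref{lem:0426}~(ii) we already have $g(w_1,w_2)=0$, that is,
\[
2(w_1w_2+1)+((2a-1)c-1)(w_1+w_2)=0,
\]
and by Lemma~\ref{lem:0426}~(i) either $w_3=1$ or $w_3=w_1w_2$. So the first thing I would do is split into these two subcases for $w_3$ and, in each, specialize $X_3$ accordingly in the remaining generator $e_1$ (equivalently, use $e_1(w_1,w_2,w_3)=0$ together with $g(w_1,w_2)=0$). Since $|w_j|=1$, I also want to record $w_2=\overline{w_1}\cdot(w_1w_2)$ and, more usefully, $\overline{w_j}=1/w_j$; in particular the relation $g(w_1,w_2)=0$ lets me solve for one of the symmetric functions $w_1+w_2$ or $w_1w_2$ in terms of the other.

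Next I would treat $g(w_1,w_2)=0$ as determining $w_1w_2$ linearly in terms of $w_1+w_2$: writing $p=w_1+w_2$ and $q=w_1w_2$, the equation reads $2q+((2a-1)c-1)p+2=0$, so $q=-1-\frac{(2a-1)c-1}{2}p$. The absolute-value conditions give a further real constraint: since $|w_1|=|w_2|=1$ we have $|q|=1$ and $\bar q=1/q$, and $\bar p = \overline{w_1}+\overline{w_2}=p/q$ (using $\overline{w_j}=w_{3-j}/q$ appropriately). These turn $p$ and $q$ into tightly constrained quantities. The heart of the argument is then to combine these relations with $e_1=0$ and solve; I expect that after eliminating $w_2$ (via $q$ and $p$) and using $|w_1|=1$, the condition $e_1(w_1,w_2,w_3)=0$ collapses to an explicit quadratic or quartic in $w_1$ whose roots are exactly the two values $w_\pm$ listed in Theorem~\ref{thm:main}~(i)(b). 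Concretely, in the subcase $w_3=w_1w_2$ I would specialize $X_3=X_1X_2$ in $e_1$, reduce modulo the ideal generated by $g$ and the norm relations, and read off the defining polynomial of $w_\pm$; the denominator $2a^2-2a+1$ and the surd $\sqrt{2a(a-1)}$ appearing in the statement strongly suggest that the governing quadratic has discriminant a multiple of $2a(a-1)$, which I would verify directly.

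Throughout, Lemma~\ref{lem:2} handles the interchange $(w_1,w_2)\mapsto(w_2,w_1)$, so it suffices to produce one root and then invoke that symmetry to obtain the paired solution; this is why the answer comes as a $\pm$ family $(w_{\pm},w_{\mp},w_\pm w_\mp)$ with $w_3=w_1w_2$ in both. I would also check that the subcase $w_3=1$ either reproduces the same solutions or is excluded, presumably because specializing $X_3=1$ in $e_1$ and combining with $g=0$ forces $c=1$ together with a value of $w_1$ that is consistent (for $a\geq 2$, $c=1$) and recovers (b), while incompatible choices are ruled out by $L\neq 0$ from Lemma~\ref{lem:cmg}~(i). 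The main obstacle I anticipate is the elimination step: reconciling the polynomial relation $e_1=0$ with both the algebraic constraint $g=0$ and the transcendental-looking unit-modulus conditions $|w_1|=|w_2|=1$ without the computation ballooning. The cleanest route is probably to parametrize by $p,q$ as above, use $|q|=1$ to pin $q=\pm1$ or an explicit unit, and only then return to recover $w_1,w_2$ as roots of $t^2-pt+q=0$, matching the closed form for $w_\pm$. I expect the positivity and integrality of $a,c$ to be what finally discards all spurious branches and leaves exactly case (b).
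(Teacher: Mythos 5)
Your opening moves coincide with the paper's: split on $w_3=1$ versus $w_3=w_1w_2$ using Lemma~\ref{lem:0426}(i), and impose $g(w_1,w_2)=0$ from Lemma~\ref{lem:0426}(ii). But the proposal defers exactly the step that carries the proof, and the substitute you sketch does not close. Writing $p=w_1+w_2$, $q=w_1w_2$, the relations $\overline{q}=1/q$ and $\overline{p}=p/q$ turn the complex conjugate of $2q+((2a-1)c-1)p+2=0$ back into the same equation, so the unit-modulus conditions extract nothing new from $g$ alone; all the content must come from eliminating against the $e_k$, which is precisely the step you label ``the main obstacle'' and leave undone. The paper carries it out by exhibiting (via Magma) explicit reciprocal quartics $f_1(X_i)$, resp.\ $X_i\left(((2a-1)c-1)X_i+2\right)f_2(X_i)$, in the ideals $\mathcal{J}_1$, $\mathcal{J}_2$, and then using that $w_1,\overline{w_1},w_2,\overline{w_2}$ are four \emph{distinct} roots --- distinctness needs Lemma~\ref{lem:0426}(iii), i.e.\ $w_1w_2\neq1$, which you never invoke --- to force $f_j(X)$ to be a scalar multiple of $(X^2-x_1X+1)(X^2-x_2X+1)$ with $x_j=w_j+\overline{w_j}$. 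Comparing coefficients makes $x_1,x_2$ the roots of an explicit real quadratic.

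The second missing idea is how $c=1$ gets forced, which is part of conclusion (b). In the paper the discriminant of that quadratic equals $-((2a-1)c+1)^2L$, resp.\ $-2a((2a-1)c+1)^2((2a-1)c+2a-3)L$, with $L$ as in (\ref{eq:0312-2}); since $x_1\neq x_2$ are real, the discriminant must be positive, giving $L<0$, and Lemma~\ref{lem:cmg}(ii) then leaves only $c=1$ or $(a,c)=(1,3)$, the latter being eliminated because it forces one of $w_1,w_2$ to equal $-1$. Your appeal to ``positivity and integrality of $a,c$'' does not reach this mechanism. Finally, your prediction for the branch $w_3=1$ is wrong: it does not ``recover (b)'' (in (b) one has $w_3=w_1w_2\neq1$); it is excluded outright, since for $c=1$ the quadratic degenerates to $X^2-4=0$, forcing $w_1,w_2\in\{\pm1\}$, and for $(a,c)=(1,3)$ it forces one of them to be $-1$. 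As written, the proposal is a plausible plan whose hard core --- the elimination producing the quartics and the $L<0$ argument --- is missing.
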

\begin{proof}
By (iii) in Lemma~\ref{lem:0426} we have $w_1w_2\neq1$, while
$w_1\neq w_2$ by our assumption. 
This means that $w_1$, $\overline{w_1}$, $w_2$, $\overline{w_2}$ are mutually distinct.
Let $x_j=w_j+1/w_j$ for $j=1,2$.
Then the polynomial having $w_1$, $\overline{w_1}$, $w_2$, $\overline{w_2}$ as roots is given by
\begin{equation}\label{eq:ff1}
f(X)=(X^2-x_1X+1)(X^2-x_2X+1).
\end{equation}
By the assumption $w_2\neq-w_1$ and (ii) of Lemma~\ref{lem:0426},
we have $g(w_1,w_2)=0$.
Let $\mathcal{J}$ denote the ideal of (\ref{RC}) 
generated by the polynomials $e_k$ ($k=0,1,2,3$) and (\ref{eq:g}).
By (i) in Lemma~\ref{lem:0426} we have $w_3=1$ or $w_3=w_1w_2$.

First suppose that $w_3=1$. 
Let $\mathcal{J}_1$ denote the ideal of (\ref{RC}) 
generated by $\mathcal{J}$ and $X_3-1$.
Then $(w_1,w_2,1)$ is a common zero of polynomials in $\mathcal{J}_1$.
We can verify that $\mathcal{J}_1$ contains $f_1(X_1)$, $f_1(X_2)$, where
\begin{align}
f_1(X)&=a_0X^4+a_1X^3+a_2X^2+a_1X+a_0,\label{eq:f1}\\
a_0&=2ac, \nonumber\\
a_1&=2(c-1)((2a-1)c-1), \nonumber\\
a_2&=4ac^2((c-1)a-c)+(c-1)(c^2+2c+5).\nonumber
\end{align}
Then $f_1(w_1)=f_1(w_2)=0$.
Since $f_1(X)$ has real coefficients, we have $f_1(X)=a_0f(X)$.
Comparing the coefficients of (\ref{eq:ff1}) and (\ref{eq:f1}),
we have
\begin{align}
a_0(x_1+x_2)+a_1&=0,\label{eq:100}\\
a_0x_1x_2+2a_0-a_2&=0.\label{eq:101}
\end{align}
By (\ref{eq:100}) and (\ref{eq:101}),
$x_1$ and $x_2$ are the roots of
\begin{equation}\label{eq:disc1}
a_0X^2+a_1X-2a_0+a_2=0.
\end{equation}
Then by using the notation of (\ref{eq:0312-2}), since the discriminant of (\ref{eq:disc1}) is
\[
-((2a-1)c+1)^2L
\]
and $x_1\not=x_2$, we have $L<0$.
First we consider (a) of Lemma~\ref{lem:cmg} (ii).
Then by (\ref{eq:disc1}) we have $X^2-4=0$, that is, $w_1,w_2\in\{\pm1\}$.
This contradicts our assumption.
Secondly we consider (b) in Lemma~\ref{lem:cmg} (ii).
Then, (\ref{eq:100}) and (\ref{eq:101}) are 
$3(x_1+x_2)+4=0$ and $3x_1x_2+4=0$, respectively.
Then $\{x_1,x_2\}=\{-2,\frac23\}$, that is,
either $w_1$ or $w_2$ is $-1$.
This contradicts our assumption.

Secondly suppose that 
$w_3=w_1w_2\not=1$.
Let $\mathcal{J}_2$ denote the ideal of (\ref{RC}) 
generated by $\mathcal{J}$ and $X_3-X_1X_2$.
Then $(w_1,w_2,w_1w_2)$ is a common zero of the polynomials in $\mathcal{J}_2$.
We can verify that $\mathcal{J}_2$ contains
$X_i\left(((2a-1)c-1)X_i+2\right)f_2(X_i)$, $i=1,2$, where
\begin{align}
f_2(X)&=b_0X^4+b_1X^3+b_2X^2+b_1X+b_0,\label{eq:f2}\\
b_0&=(c+1)(4ac(a-1)+c+1),\nonumber\\
b_1&=4(a(c-1)+2)((2a-1)c-1),\nonumber\\
b_2&=2a(2a-1)^2c^3-2(2a-1)(2a^2-a+1)c^2-2(a-2)c-2(5a-9).\nonumber
\end{align}
Then $f_2(w_1)=f_2(w_2)=0$.
Since $f_2(X)$ has real coefficients, we have $f_2(X)=b_0f(X)$.
Comparing coefficients of (\ref{eq:ff1}) and (\ref{eq:f2}),
we have
\begin{align}
&b_0(x_1+x_2)+b_1=0,\label{eq:102}\\
&b_0x_1x_2+2b_0-b_2=0.\label{eq:103}
\end{align}
By (\ref{eq:102}) and (\ref{eq:103})
$x_1$ and $x_2$ are the roots of
\begin{align}
&b_0X^2+b_1X-2b_0+b_2=0.\label{eq:disc2}
\end{align}
Then by using the notation of (\ref{eq:0312-2}), the discriminant of (\ref{eq:disc2}) is
\[
-2a((2a-1)c+1)^2((2a-1)c+2a-3)L.
\]
Since $x_1\not=x_2$, we have $(2a-1)c+2a-3\not=0$ and $L<0$.

First we consider (a) in Lemma~\ref{lem:cmg} (ii).
Then by (\ref{eq:disc2}) the real parts of $w_1$ and $w_2$ are given by
\[
\frac{-a+1\pm a\sqrt{2a(a-1)} }{2a^2-2a+1}.
\]
Since $|w_1|=|w_2|=1$, we have (b) in Theorem~\ref{thm:main} (i).

Secondly we consider (b) in Lemma~\ref{lem:cmg} (ii).
Then by $a=1$ and $c=3$, (\ref{eq:disc2}) is $X(X+2)=0$, that is,
either $w_1$ or $w_2$ is $-1$.
This contradicts our assumption.
\end{proof}

Therefore, we have shown that, in case (i) in Lemma~\ref{lem:song},
complex Hadamard matrices must arise in one of the ways described
in Theorem~\ref{thm:main}. In the next section, we will show 
the nonexistence of a complex Hadamard
matrices for cases (ii) and (iii) in Lemma~\ref{lem:song}.

\section{Other cases}\label{sec:4}
In this section we consider cases (ii) and (iii) in Lemma~\ref{lem:song}.
\begin{lem}\label{lem:case2}
Suppose that the matrix {\rm(\ref{P})} satisfies case {\rm(ii)} in {\rm Lemma~\ref{lem:song}}.
Then there does not exist complex Hadamard matrices of the form {\rm(\ref{eq:W})}.
\end{lem}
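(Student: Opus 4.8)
The plan is to follow exactly the same strategy that worked for case (i), but now exploiting the specific parameter values forced by case (ii) of Lemma~\ref{lem:song}. First I would substitute $(r,s,b^2)=(-(k_2+1),0,(k_2+1)(k_1+k_2+1))$ into the polynomials $e_0,e_1,e_2,e_3$ defined in \eqref{eq:0}--\eqref{eq:3}, and also into the difference formulas \eqref{eq:gen-e0e3} and \eqref{eq:gen-e1e2} of Lemma~\ref{lem:0430}. With $s=0$ the expression $e_0-e_3$ simplifies considerably, and $e_1-e_2$ retains the factor $-bi(X_1-X_2)$ coming from the hermitian-versus-nonhermitian distinction. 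Since we assume $w_1\neq w_2$ and $b>0$, the vanishing of $e_1-e_2$ should again pin down $w_3$ to one of a small number of values (the analogue of Lemma~\ref{lem:0426}(i), namely $w_3=1$ or $w_3=w_1w_2$), reducing the problem to a two-variable system in $w_1,w_2$ on the unit circle.

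Next I would split into the two cases $w_3=1$ and $w_3=w_1w_2$, specialize $X_3$ accordingly in the remaining generators, and form the elimination ideal in $\C[X_1,X_2]$ just as in the proof of Lemma~\ref{lem:11}. By imposing the reality constraint $|w_j|=1$ (i.e. introducing $x_j=w_j+1/w_j$ and demanding the quartic factor to have the palindromic form $\eqref{eq:ff1}$), I expect to obtain a quadratic in $x_1+x_2$ and $x_1x_2$ whose discriminant is an explicit polynomial in $k_1,k_2$. The goal is to show that this discriminant, together with the integrality constraints on $k_1,k_2$ (recall $k_2+1\mid k_1$ and $b^2\in\Z$ as used in Lemma~\ref{lem:cmg}) and the requirement $|w_j|=1$, forces an inconsistency—either $x_j\notin[-2,2]$ so that $w_j$ is not on the unit circle, or $w_1=w_2$, or a direct contradiction with the positivity of a multiplicity $m_1$ from Lemma~\ref{lem:song}(ii). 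Because case (ii) is the \emph{non} self-dual fission, I anticipate the first eigenmatrix no longer admits the symmetry that produced genuine solutions in case (i), so every branch should terminate in a contradiction.

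The main obstacle I expect is organizing the elimination and the resulting discriminant computation so that the nonexistence is transparent rather than buried in a large polynomial. Specifically, after eliminating I will likely land on a condition of the form (discriminant)$\,<0$ or (discriminant) a perfect square, and I must verify that no positive-integer pair $(k_1,k_2)$ satisfying the scheme's feasibility conditions meets it. The cleanest route is probably to reduce everything to a single one-variable integrality condition—analogous to the factor $L$ in \eqref{eq:0312-2} whose nonvanishing was established in Lemma~\ref{lem:cmg}(i)—and then show that the corresponding quantity here never permits $x_1,x_2\in[-2,2]$ with $x_1\neq x_2$. Since the statement asserts outright nonexistence, there is no need to track which solutions survive; I only need to exhibit, in each of the two branches for $w_3$, a contradiction, and I expect these to follow from sign or integrality considerations once the discriminant is written in factored form.
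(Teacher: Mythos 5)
There is a genuine gap. Your plan pivots on an ``analogue of Lemma~\ref{lem:0426}(i)'' that would force $w_3=1$ or $w_3=w_1w_2$, but that factorization is specific to case (i), where $r=0$ makes the second factor of \eqref{eq:gen-e1e2} equal to $(X_3^2+X_1X_2)-(X_1X_2+1)X_3=(X_3-1)(X_3-X_1X_2)$. In case (ii) you have $r=-(k_2+1)$, so that factor becomes $-k_2(X_3^2+X_1X_2)-\bigl(X_1X_2-(k_2+1)(X_1+X_2)+1\bigr)X_3$, which does not split into $(X_3-1)(X_1X_2-X_3)$; the vanishing of $e_1-e_2$ therefore does not pin $w_3$ down to two values, and the whole two-branch elimination you build on top of it never gets off the ground. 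Beyond that, the proposal is a programme rather than a proof: each branch ends with ``I expect'' or ``should terminate in a contradiction,'' and no discriminant, no factor analogous to $L$, and no integrality contradiction is actually exhibited.

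The paper's argument is far shorter and uses none of this machinery: setting $s=0$ in \eqref{eq:3} collapses $e_3$ to $-e_3=X_1X_2\bigl(X_3^2+(k_1+k_2-1)X_3+1\bigr)$, so $w_3$ satisfies a real monic-reciprocal quadratic and hence $w_3+\overline{w_3}=-(k_1+k_2-1)$. Since $|w_3|=1$ forces this sum into $[-2,2]$ while $k_1+k_2-1\geq 2$, one gets $k_1+k_2=3$ and $w_3=-1$; then $k_1$ even gives $(k_1,k_2)=(2,1)$, and $m_1=\tfrac{k_1}{2(k_2+1)}=\tfrac12\notin\Z$ is the contradiction. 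Note that the single generator $e_3$ together with the unit-circle constraint on $w_3$ already suffices --- no elimination in $w_1,w_2$ and no analysis of $e_1-e_2$ is needed. If you want to salvage your approach, you would at minimum have to replace the false reduction of $w_3$ by an honest analysis of the non-factoring quadratic in $X_3$, at which point you would in effect be rediscovering the $e_3$ computation.
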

\begin{proof}
After specializing $r=-(k_2+1)$ and $s=0$, we have
\[
-e_3=X_1X_2(X_3^2+(k_1+k_2-1)X_3+1).
\]
Hence
\begin{equation}\label{eq:case2}
w_3^2+(k_1+k_2-1)w_3+1=0.
\end{equation}
Since $k_1+k_2-1\geq2$,
we have $w_3=-1$ and $k_1+k_2=3$.
Since $k_1$ is even, we have $k_1=2$ and $k_2=1$.
Then by (ii) in Lemma~\ref{lem:song} we have $m_1=\frac12\not\in\Z$.
This is a contradiction.
\end{proof}

\begin{lem}\label{lem:case3}
Assume that the matrix {\rm(\ref{P})} satisfies case {\rm(iii)} in {\rm Lemma~\ref{lem:song}}.
Then there does not exist complex Hadamard matrices of the form {\rm(\ref{eq:W})}.
\end{lem}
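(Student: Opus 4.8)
The plan is to mirror the proof of Lemma~\ref{lem:case2}: specialize the parameters of case (iii), namely $r=-1$, $s=k_1$, and $b^2=k_1+1$, into the defining polynomials and extract strong numerical constraints on $(w_1,w_2,w_3)$ and on $(k_1,k_2)$. The first step is to feed $r=-1$ into \eqref{eq:gen-e1e2}. Since $r+1=0$ kills the $X_3^2+X_1X_2$ term and $X_1X_2+r(X_1+X_2)+1=(X_1-1)(X_2-1)$, the difference collapses to
\[
e_1-e_2=bi(X_1-X_2)(X_1-1)(X_2-1)X_3 .
\]
As $b>0$, $w_3\neq0$ and $w_1\neq w_2$, a common zero must satisfy $w_1=1$ or $w_2=1$; invoking Lemma~\ref{lem:2} I may assume $w_2=1$, and then $w_1\neq1$.

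Next I would substitute $X_2=1$ into $e_1$ (with $r=-1$, $b^2=k_1+1$, $n=1+k_1+k_2$). The key computation is that, after this substitution, $e_1$ factors as $X_3$ times a quadratic in $X_1$ whose leading and constant coefficients both turn out to be the \emph{real} number $-(k_1+2)/4$ (the imaginary parts cancel because $\tfrac12(-1+bi)\cdot\tfrac12(1+bi)=-(1+b^2)/4$ is real). Explicitly I expect
\[
e_1(X_1,1,X_3)=-\tfrac{X_3}{4}\bigl((k_1+2)X_1^2+2(k_1+2k_2)X_1+(k_1+2)\bigr).
\]
Since $w_3\neq0$, this palindromic real quadratic must vanish at $w_1$, so its two roots multiply to $1$.

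The decisive step is then a discriminant analysis. The discriminant of this quadratic is
\[
4(k_1+2k_2)^2-4(k_1+2)^2=16(k_2-1)(k_1+k_2+1),
\]
which is nonnegative because $k_2\geq1$ and $k_1+k_2+1>0$; hence both roots are \emph{real}. This is exactly the point I expect to be the main obstacle, because a priori a conjugate pair of non-real roots of a real palindromic quadratic would lie on the unit circle and be perfectly compatible with $|w_1|=1$; it is only the sign of the discriminant that rules this escape out. With the roots real, $|w_1|=1$ forces $w_1=\pm1$, and $w_1\neq1$ gives $w_1=-1$. Substituting $w_1=-1$ into the quadratic yields $4(1-k_2)=0$, i.e. $k_2=1$.

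Finally, with $k_2=1$ I would appeal to the integrality of the multiplicity $m_1$, exactly as in the last step of Lemma~\ref{lem:case2}. By part (iii) of Lemma~\ref{lem:song},
\[
m_1=\frac{(k_1+k_2+1)k_1}{k_1+1}=\frac{k_1(k_1+2)}{k_1+1}=(k_1+1)-\frac{1}{k_1+1},
\]
which is never an integer for the positive even integer $k_1$. Since $m_1$ is a multiplicity and must be a positive integer, this is a contradiction, and no complex Hadamard matrix of the form \eqref{eq:W} can occur in case (iii).
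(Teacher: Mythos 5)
Your proposal is correct and follows essentially the same route as the paper's proof: the factorization of $e_1-e_2$ under $r=-1$ forces one of $w_1,w_2$ to equal $1$, the specialized $e_1$ yields the real palindromic quadratic $(k_1+2)X^2+2(k_1+2k_2)X+(k_1+2)$ whose roots are forced to be real (the paper phrases this as the middle coefficient $\tfrac{2(k_1+2k_2)}{k_1+2}$ being at least $2$ rather than via the discriminant), giving $k_2=1$, and the non-integrality of $m_1=\frac{(k_1+2)k_1}{k_1+1}$ supplies the final contradiction. The only cosmetic differences are that you normalize $w_2=1$ where the paper takes $w_1=1$, and you omit the unused relation $h_1(w_1,w_2,w_3)=0$ that the paper records but never invokes.
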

\begin{proof}
After specializing $r=-1$ and $s=k_1$ in (\ref{eq:gen-e0e3}) and (\ref{eq:gen-e1e2}),
we have 
\begin{align}
e_0-e_3=&\frac12(k_1+k_2+1)h_1(X_1,X_2,X_3), \label{e0e3}\\
e_1-e_2=&biX_3(X_1-X_2)(X_1-1)(X_2-1), \label{e1e2}
\end{align}
where
\begin{align*}
h_1(X_1,X_2,X_3)&=k_1(X_1+X_2)(X_1X_2+X_3^2)-2(k_1-k_2+1)X_1X_2X_3 \\
&\quad+2X_1X_2(X_3^2+1).
\end{align*}
Then by (\ref{e0e3}) we have
\begin{equation}\label{eq:g123}
h_1(w_1,w_2,w_3)=0.
\end{equation}
We may set $w_1=1$ without of loss of generality by (\ref{e1e2}).
After specializing $X_1=1$, $r=-1$, and $s=k_1$, we have
\[
-4e_1=X_3((k_1+2)X_2^2+2(k_1+2k_2)X_2+k_1+2).
\]
Hence
\begin{equation}\label{eq:g123e1}
w_2^2+\frac{2(k_1+2k_2)}{k_1+2}w_2+1=0.
\end{equation}
Since $\frac{2(k_1+2k_2)}{k_1+2}\geq2$, we have $w_2=-1$.
Then by (\ref{eq:g123e1}) we have $k_2=1$.
Then by (iii) in Lemma~\ref{lem:song} we have $m_1=\frac{(k_1+2)k_1}{k_1+1}\not\in\Z$.
This is a contradiction.
\end{proof}


\appendix
\section{Verification by Magma}

\begin{verbatim}
QQ<i>:=QuadraticField(-1);
PR<w1,w2,w3,k1,k2,r,s,b,c>:=PolynomialRing(QQ,9);
F:=FieldOfFractions(PR);

d:=3;
eigenP:=Matrix(PR,4,4,[
 1,k1/2,k1/2,k2,
 1,1/2*(r+b*i),1/2*(r-b*i),-(r+1),
 1,1/2*(r-b*i),1/2*(r+b*i),-(r+1),
 1,s/2,s/2,-(s+1)]);
P:=func<i,j|eigenP[i+1,j+1]>;

conjugateP:=Matrix(PR,4,4,[
 1,k1/2,k1/2,k2,
 1,1/2*(r-b*i),1/2*(r+b*i),-(r+1),
 1,1/2*(r+b*i),1/2*(r-b*i),-(r+1),
 1,s/2,s/2,-(s+1)]);
P1:=func<i,j|conjugateP[i+1,j+1]>;

n:=&+[P(0,j) : j in [0..d]];
n eq 1+k1+k2;
WW:=[1,w1,w2,w3];
W:=func< j | WW[j+1] >;

e:=func< k | &*[ x : x in WW ]*( 
             ( &+[P(k,j)*W(j):j in [0..d]] )
			 *( &+[P1(k,j)/W(j):j in [0..d]] )-n ) >; // (5)
ee:=[e(0),e(1),e(2),e(3)];

// Eqs (9), (10), (11), (12).
eq9:=(1+k1/2*(w1+w2)+k2*w3)*(w1*w2*w3+k1/2*(w1+w2)*w3+k2*w1*w2)
 -n*w1*w2*w3;
eq10:=(1+(r+b*i)/2*w1+(r-b*i)/2*w2-(r+1)*w3)
 *(w1*w2*w3+((r+b*i)/2*w1+(r-b*i)/2*w2)*w3-(r+1)*w1*w2)-n*w1*w2*w3;
eq11:=(1+(r-b*i)/2*w1+(r+b*i)/2*w2-(r+1)*w3)
 *(w1*w2*w3+((r-b*i)/2*w1+(r+b*i)/2*w2)*w3-(r+1)*w1*w2)-n*w1*w2*w3;
eq12:=(1+s*(w1+w2)/2-(s+1)*w3)*(w1*w2*w3+s*(w1+w2)/2*w3-(s+1)*w1*w2)
 -n*w1*w2*w3;
ee eq [eq9,eq10,eq11,eq12];


// Lemma 2
c2:=2*(k1*k2+s^2+s)*(w1+w2)+4*(k2+s+1)*w1*w2;
c1:=k1^2*(w1+w2)^2
     +2*(k1-s)*(w1*w2+1)*(w1+w2)
     -s^2*(w1^2+w2^2)+2*(2*k2^2-3*s^2-4*s-2)*w1*w2;
c0:=2*((k1*k2+s*(s+1))*(w1+w2)+2*(k2+s+1))*w1*w2;
4*(e(0)-e(3)) eq c2*w3^2+c1*w3+c0; // (13)
e(1)-e(2) eq 
 -b*i*(w1-w2)*((r+1)*(w3^2+w1*w2)-(w1*w2+r*(w1+w2)+1)*w3); // (14)


// Lemma 3
[ ff @ hom< PR-> PR | [w2,w1,w3,k1,k2,r,s,b,c] > : ff in ee ]
 eq [e(0),e(2),e(1),e(3)];


// Lemma 4 (i)
eqOfLem4:=b^2*k2-k1*(k2+1);


// Lemma 5
a:=-s/2;
K2:=2*a-1; // (15)
eq15:=K2-k2;
K1:=2*a*(2*a-1)*c; // (16)
eq16:=K1-k1;
eq17:=b^2-4*a^2*c; // (17)
L:=(2*a-1)*c^2-2*(a+1)*c-1; // (18)
hom3:=hom< PR->PR | [w1,w2,w3,K1,K2,0,s,b,c] >;
e0:=e(0) @ hom3;
e1:=e(1) @ hom3;
e2:=e(2) @ hom3;
e3:=e(3) @ hom3;
eqOfLem4i:=eqOfLem4 @ hom3;
I:=ideal< PR | e0,e1,e2,e3,eqOfLem4i,eq15,eq16,eq17 >;


// Lemma 5
Lem5_1:=function(a,c)
  k1:=2*a*(2*a-1)*c;
  k2:=2*a-1;
  m1:=(k1+k2+1)*k2/(2*(k2+1));
  return m1;
end function;
 Lem5_1(1,2) eq 3/2;
 Lem5_1(1,4) eq 5/2;
 Lem5_1(2,2) eq 21/2;


//// Section 3

// Lemma 6
g1:=(2*a-1)*(w1*w2+w3^2)
      +(w1*w2+a*((2*a-1)*c-1)*(w1+w2)+1)*w3; // (22)
e1-e2 eq b*i*(w1-w2)*(w3-1)*(w1*w2-w3); // (20)
e0-e3 eq a*((2*a-1)*c+1)*(w1+w2)*g1; // (21)
 
g:=2*(w1*w2+1)+((2*a-1)*c-1)*(w1+w2); // (19)
a*w1*w2*g eq w1*w2*g1 @ hom< PR->PR | [w1,w2,1,K1,K2,0,s,b,c] >;
a*w1*w2*g eq g1 @ hom< PR->PR | [w1,w2,w1*w2,K1,K2,0,s,b,c] >;

eq24:=((2*a-1)*c-1)*w1^2+4*w1+(2*a-1)*c-1; // (24)
w1*(g @ hom< F->F | [w1,(F!w1)^(-1),1,K1,K2,0,s,b,c] >) eq eq24;

eq25:=a*c*w1^4+2*((a-1)*c+1)*w1^2+a*c; // (25)
4*w1^2*(e1 @ hom< F->F | [w1,1/w1,1,K1,K2,r,s,b,c ] >)
 eq 2*s*eq25 - eq17*(w1^2-1)^2;

cL:=2*((2*a-1)*c+1)*L;
c24:=4*a*c*w1^3-a*c*(2*a*c-c-1)*w1^2+(8*a*c-8*c+8)*w1
 -(2*a*c-c-1)*(a*c-2*c+2);
c25:=-((8*a*c-4*c-4)*w1-(2*a*c-c+3)*(2*a*c-c-5));
cL eq c24*eq24+c25*eq25;


// Lemma 7
-2*a*(c-1)*w1^2
 eq e1 @ hom< PR->PR | [w1,-w1,1,K1,K2,r,s,b,c ] > + w1^2*eq17;
w1^2*(w1^4+2*((c-1)*a+1)*w1^2+1)
 eq e1 @ hom< PR->PR | [w1,-w1,-w1^2,K1,K2,r,s,b,c ] > - w1^4*eq17;


// Lemma 8
((2*a-1)*c-3)*(w1-1)
 eq g @ hom< PR->PR | [w1,-1,w3,K1,K2,r,s,b,c ] >;


// Lemma 9, (a,c)=(2,1), that is, (s,c)=(-4,1)
12 eq K1 @ hom< PR->PR | [w1,-1,w3,K1,K2,r,-4,4,1 ] >;
3 eq K2 @ hom< PR->PR | [w1,-1,w3,K1,K2,r,-4,4,1 ] >;
-4*(w1-1)^2 eq e1 @ hom< PR->PR | [w1,-1,1,K1,K2,r,-4,4,1 ] >;
w1*(5*w1^2-6*w1+5) eq 
 e1 @ hom< PR->PR | [w1,-1,-w1,K1,K2,r,-4,4,1 ] >;


// Lemma 10, (a,c)=(1,3), that is, (s,c)=(-2,3)
6 eq K1 @ hom< PR->PR | [w1,-1,w3,K1,K2,r,-2,b,3 ] >;
1 eq K2 @ hom< PR->PR | [w1,-1,w3,K1,K2,r,-2,b,3 ] >;
eqOfLem101:=-(3*w1^2-2*w1+3);
eqOfLem101 -1/4*(b^2-12)*(w1+1)^2
 eq e1 @ hom< PR->PR | [w1,-1,1,K1,K2,r,-2,b,3 ] >;
eqOfLem102:=4*w1*(w1^2+1);
eqOfLem102 +1/4*(b^2-12)*w1*(w1+1)^2
 eq e1 @ hom< PR->PR | [w1,-1,-w1,K1,K2,r,-2,b,3 ] >;


// Lemma 11
// w3=1:
I1:=ideal< PR | I,w3-1,g >;
a0:=2*a*c;
a1:=2*(c-1)*((2*a-1)*c-1);
a2:=4*a*c^2*((c-1)*a-c)+(c-1)*(c^2+2*c+5);
f1w1:=a0*w1^4+a1*w1^3+a2*w1^2+a1*w1+a0;
f1w2:=a0*w2^4+a1*w2^3+a2*w2^2+a1*w2+a0;
s*f1w1 in I1;
s*f1w2 in I1;
dicsOfeq30:=a1^2-4*a0*(-2*a0+a2);
dicsOfeq30 eq -4*((2*a-1)*c+1)^2*L;

// w3=w1*w2:
I2:=ideal< PR | I,w1*w2-w3,g >;
b0:=(c+1)*(4*a*c*(a-1)+c+1);
b1:=4*(a*(c-1)+2)*((2*a-1)*c-1);
b2:=2*a*(2*a-1)^2*c^3-2*(2*a-1)*(2*a^2-a+1)*c^2-2*(a-2)*c-2*(5*a-9);
f2w1:=b0*w1^4+b1*w1^3+b2*w1^2+b1*w1+b0;
f2w2:=b0*w2^4+b1*w2^3+b2*w2^2+b1*w2+b0;
w1*(w1*((2*a-1)*c-1)+2)*f2w1 in I2;
w2*(w2*((2*a-1)*c-1)+2)*f2w2 in I2;
dicsOfeq34:=b1^2-4*b0*(-2*b0+b2);
dicsOfeq34 eq -8*a*((2*a-1)*c+1)^2*((2*a-1)*c+2*a-3)*L;


//// Section 4

// Lemma 12
-e(3) @ hom< PR->PR | [w1,w2,w3,k1,k2,-(k2+1),0,b,c] >
 eq w1*w2*(w3^2+(k1+k2-1)*w3+1);


// Lemma 13
hom3c:=hom< PR->PR | [w1,w2,w3,k1,k2,-1,k1,b,c] >;
eqL13:=[ e @ hom3c : e in ee ];
h1:=k1*(w1+w2)*(w1*w2+w3^2)-2*(k1-k2+1)*w1*w2*w3+2*w1*w2*(w3^2+1);
eqL13[1]-eqL13[4] eq 1/2*(k1+k2+1)*h1; // (36)
eqL13[2]-eqL13[3] eq b*i*w3*(w1-w2)*(w1-1)*(w2-1); // (37)
eqOfLem13:=w3*((k1+2)*w2^2+2*(k1+2*k2)*w2+k1+2);
subs13:=hom< PR->PR | [1,w2,w3,b^2-1,k2,-1,b^2-1,b,c] >;
-4*eqL13[2] @ subs13 eq eqOfLem13 @ subs13;

// Total time: 1.229 seconds, Total memory usage: 32.09MB
\end{verbatim}

\end{document}